\newtheorem{theo}{Theorem}[section]
\newtheorem{prop}[theo]{Proposition}
\newtheorem{lemma}[theo]{Lemma}
\newtheorem{coro}[theo]{Corollary}
\theoremstyle{definition}
\newtheorem{defi}[theo]{Definition}
\newtheorem{rem}[theo]{Remark}
\newcommand{\Z}{\mathbb{Z}}
\newcommand{\N}{\mathbb{N}}
\newcommand{\sep}{\mathrm{sep}}
\newcommand{\gl}[2]{\mathrm{GL}_{#1}(#2)}
\newcommand{\qp}[0]{\mathbb{Q}_p}
\newcommand{\zp}{\mathbb{Z}_p}
\newcommand{\qptimes}{\mathbb{Q}^{\times}_p}
\newcommand{\zptimes}{\mathbb{Z}^{\times}_p}
\newcommand{\hcal}[1]{\mathcal{#1}}
\newcommand{\hrm}[1]{\mathrm{#1}}
\newcommand{\und}[1]{\underline{#1}}
\newcommand{\ov}[1]{\overline{#1}}
\newcommand{\hbb}[1]{\mathbb{#1}}
\newcommand{\cg}{\mathcal{G}}
\newcommand{\ch}{\mathcal{H}}
\newcommand{\sms}{\hrm{ss}}
\newcommand{\detaleproj}[2]{\mathrm{Mod}_{\mathrm{prj}}^{\mathrm{\acute{e}t}}\left(#1,#2\right)}
\newcommand{\cdetale}[2]{\mathscr{M}\mathrm{od}^{\mathrm{\acute{e}t}}(#1,#2)}
\newcommand{\colim}[1]{\mathop{\mathrm{colim}}}
\newcommand{\ord}{\mathrm{ord}}
\title{\textsc{Multivariable Lubin-Tate or plectic Fontaine's equivalences for a $p$-adic local field}}
\author{Nataniel Marquis}
\begin{document}

		\vspace{50ex}
		

		\hrulefill
		\begin{center}\bfseries \huge \textbf{Implications of Breuil-Herzig-Hu-Morra-Schraen's conjectures on Z\'abr\'adi's functor}
		\end{center}
		%
		\hrulefill

		\vspace*{0.5cm}

		\begin{center}\Large
			Nataniel Marquis\footnote{\underline{n.marquis@uni-muenster.de}. Universität Münster, Mathematisches Institut, Orléans-Ring 12, 48149 Münster, Germany..}
		\end{center}

			\renewcommand{\thefootnote}{\arabic{footnote}}
			\setcounter{footnote}{0}
			
		\vspace{1.5 cm}
		\textit{\textbf{Abstract.}} \textemdash\, Let $\rho$ be an $n$-dimensional representation of $\cg_{\qp}$ over $\ov{\mathbb{F}_p}$. When $\rho$ is generic and a good conjugate, the article \cite{breuil2021conjectures} introduces the notion of compatibility with $\rho$ for an admissible representation of $\gl{n}{\qp}$. In \textit{loc. cit.}, the five authors also question whether one could recover a representation  of $\cg_{\qp}^{n-1}$, called $\ov{L}^{\boxtimes}(\rho)$ and constructed from $\rho$, from some $\Pi$ compatible with $\rho$ by using Z\'abr\'adi's functor $\mathbf{V}_{\Delta}$(see \cite{zabradi_finiteness}). We give a range of results, for an arbitrary $\Pi$ verifying some "weak" compatibilities with $\rho$, about how badly $\mathbf{V}_{\Delta}(\Pi)$ behaves. In particular, when $\rho$ is reducible and $n\geq 3$, no $\Pi$ compatible with $P_{\rho}$ can verify $\mathbf{V}_{\Delta}(\Pi)\simeq \ov{L}^{\boxtimes}(\rho)$.

	\renewcommand*{\thefootnote}{\arabic{footnote}}
	
	\vspace{2 cm}
	
	\section{Introduction / recollection}
	
	While the mod $p$ local Langlands correspondence is well understood for $\gl{2}{\qp}$ (see \cite{breuil1} \cite{colmez_correspondance}, etc), the situation for $\gl{n}{\qp}$ remains at the step of groping into specific examples, especially the ones coming from the global picture. One of the main issues is that the theory of smooth admissible finite length $\gl{n}{\qp}$-representations modulo $p$ lacks both a classification of irreducibles as given by \cite{barthel_livné} \cite{breuil1} \cite{breuil2} for $\gl{2}{\qp}$ and the study of extensions as in \cite[\S VII]{colmez_correspondance}. Partial answers can be found both in \cite{herzig_class} for the classification and in \cite{Hauseux_2016} for the extensions' study, but they don't tackle supercuspidal representations.
	
	Let $\rho$ be an $n$-dimensional representations $\rho$ of $\cg_{\qp}$ over an algebraic extension $\mathbb{F}$ of $\hbb{F}_p$. Recently, five authors have formulated in \cite{breuil2021conjectures} some conjectures about the shape of representations associated by the global picture to $\rho$, when the latter is generic and a good conjugate. More precisely, they define in \cite[Def. 2.4.2.7]{breuil2021conjectures} the notion of a smooth admissible representation of $\gl{n}{\qp}$ over $\mathbb{F}$ compatible with $\rho$. The number of Jordan-Hölder components of such $\Pi$ are prescribed by the minimal standard parabolic $P_{\rho}$ whose $\hbb{F}$-points contain the image of $\rho$; this $P_{\rho}$ also determines how these components fit in the classification of \cite{herzig_class} (see \cite[Def. 2.4.1.5]{breuil2021conjectures}). The combinatoric of $\Pi$'s subquotients is dicted by the smallest algebraic subgroup $\widetilde{P}_{\rho}$ containing the Levi $M_{P_{\rho}}$ and whose $\hbb{F}$-points contain the image of $\rho$. Finally, compatibility imposes the image of the irreducible components by Breuil's functor $\mathbf{V}_{\xi}$ among subquotients of $\overline{L}^{\otimes}(\rho):=\otimes_{1\leq i <n} \Lambda^i \rho$. 
	
	Here, Breuil's functor $\mathbf{V}_{\xi}$ is a version of Colmez's functor $\mathbf{V}$ (see \cite[\S IV.3]{colmez_correspondance})  defined in \cite{breuil_ind_parab} that produces ind-finite-dimensional Galois representations from smooth representations of $\gl{n}{\qp}$. We won't need its precise definition. One current issue of Breuil's functor is that we do not know wether it does produce finite dimensional, nor non zero objects, except for principal series or for $n=2$. 
	
	Another generalisation of $\mathbf{V}$ has been developed by G. Z\'abr\'adi in \cite{zabradi_gln} for a general reductive group $G$. This functor $\mathbf{D}_{\Delta}^{\vee}$ will be the other actor of this note. Let us recall the philosophy of Z\'abr\'adi's functor $\mathbf{D}_{\Delta}^{\vee}$. In our situation, $\Delta$ is the canonical set of simple roots in the canonical root system of $\mathrm{GL}_n$, verifying $\# \Delta=n-1$. We identify $\llbracket 1,n-1\rrbracket$ to $\Delta$ by sending $i$ to the character of the torus $\mathrm{Diag}(x_1,\ldots,x_n)\mapsto x_ix_{i+1}^{-1}$ and equip $\Delta$ with the associated order. Combining \cite{zabradi_gln} and \cite[Th. A]{zabradi_finiteness}, for any smooth representation $\Pi$ of $\mathrm{GL}_n(\qp)$ over $\mathbb{F}$, of finite length, $\mathbf{D}_{\Delta}^{\vee}$ produces a finite projective topological étale $T_{+,\Delta}$-module over $E_{\Delta}:=\cup_{\mathbb{F}_q \subset \mathbb{F}} \mathbb{F}_q \llbracket X_{\alpha} \, |\, \alpha \in \Delta\rrbracket[X_{\Delta}^{-1}]$. Here $T_{+,\Delta}$ is the monoid of diagonal matrices with decreasing valuations. If $\Pi$ has a central character, then $\qptimes \hrm{Id}_n \subset T_{+,\Delta}$ also acts via a character on $\mathbf{D}_{\Delta}^{\vee}$. Hence, the slightest generalisation of \cite{zabradi_equiv}, produces an equivalence $\mathbb{V}_{\Delta}$ from $\cdetale{T_{+,\Delta}}{E_{\Delta}}$ with "central character" and smooth finite dimensional representations of $(\cg_{\qp,\Delta} \times \qptimes)$  over ${\mathbb{F}}$ with "central character", i.e. $\qptimes$ acting via homotethies. Post-composing $\mathbf{D}_{\Delta}^{\vee}$ with this equivalence, forgetting the central characters and applying a suitable duality functor provides a functor $\mathbf{V}_{\Delta}$ to finite dimensional representations of $\cg_{\qp}^{n-1}$. Two important features are to highlight. First, it seems to keep more information than Breuil's functor by staying with multivariable $(\varphi,\Gamma)$-modules; indeed, \cite{zabradi_gln} produces a $\gl{n}{\qp}$-equivariant sheaf on the flag variety $\gl{n}{\qp}/B(\qp)$ from this multivariable $(\varphi,\Gamma)$-module associated to a $\gl{n}{\qp}$-representation, smiliarily to what $D \boxtimes \hbb{P}^1(\qp)$ was for $\gl{2}{\qp}$ (see \cite[\S II.1]{colmez_correspondance}). Second, from this equivariant sheaf, \cite{zabradi_finiteness} deduces finiteness results that are lacking for Breuil's functor. They don't recover more non-vanishing results than what was known for Breuil's functor.

	As stated in \cite[Rem. 1.2.5]{breuil2021conjectures}, the existence of Z\'abr\'adi's functor allows an improved conjecture about what the $\Pi$ associated using global methods looks like. The best scenario would be that Z\'abr\'adi's functor produces $\overline{L}^{\boxtimes}(\rho):=(\boxtimes_{1\leq i <n} (\Lambda^i \rho))$ and that its restriction to the diagonal embedding of $\cg_{\qp}$ identifies with $\mathbf{V}_{\xi}(\Pi)$, which is isomorphic to $\overline{L}^{\otimes}(\rho)$. Such conjecture fills the following small insatisfaction: the representation $\overline{L}^{\otimes}(\rho)$ doesn't determine $\rho$, already in dimension $3$, where $\ov{L}^{\boxtimes}(\rho)$ does.

	\vspace{0.5cm}
	
	The first aim of this note is to prove that this scenario never happens in the reducible case, even when downgrading compatibility with $\rho$ to compatibility with $\widetilde{P}_{\rho}$ (see Theorem \ref{main_failure_1}). The rough idea is that $\Pi$ being compatible with $\widetilde{P}_{\rho}$ imposes a shortage of Jordan-Hölder components of $\mathbf{V}_{\Delta}(\Pi)$, in comparison to their profusion in $\ov{L}^{\boxtimes}(\rho)$. We use the same method to treat a number of irreducible cases (see Theorem \ref{irred_fail}).
	
	One can argue that the condition \cite[Def. 2.4.2.7 (i)]{breuil2021conjectures} imposes heavy conditions on the image of supersingular by $\mathbf{V}_{\Delta}$. What would happen if we replaced the \textit{loc. cit.} conditions (i)-(ii) by the single $\mathbf{V}_{\xi}(\Pi)= \overline{L}^{\otimes}(\rho)$ ? This note also gives bad behavior of $\dim \mathbf{V}_{\Delta}(\Pi)$ under this hypothesis for a toy example; we tackle the case of $\mathrm{GL}_3(\qp)$ and $\rho$ being of dimension $3$ and Loewy length $3$ over $\overline{\mathbb{F}_p}$. For the precise result, see Proposition \ref{failure_2}.

	\vspace{0.3 cm}

	\subsection*{Notations}

	In this note, the field $\mathbb{F}$ is an algebraic extension of $\mathbb{F}_p$. It will be equipped with the discrete topology, implying that continuous representations of locally profinite groups are smooth, i.e. have open stabilisers. All inductions should be smooth inductions.
	
	For a finite length representation $V$ of a group $G$, we denote the socle (resp. cosocle) filtration by $\hrm{soc}^i_G V$ (resp. $\hrm{cosoc}^i_G V$) and its graded pieces by $\hrm{soc}_{i,G} V$ (resp. $\hrm{cosoc}_{i,G} V$). The Loewy length is the minimum integer $i$ such that $\hrm{soc}^i_{G}=V$.
	
	Recall that the Artin map is defined by $$\hrm{Art}^{-1} \, : \, \cg_{\qp}^{\hrm{ab}} \rightarrow \widehat{\qptimes}, \,\,\, \sigma \mapsto x_{\sigma} p^{n_{\sigma}}$$ where $x_{\sigma}\in \zptimes$ is characterised by $\forall \zeta\in \mu_{p^{\infty}}, \,\, \sigma(\zeta)=\zeta^x$. It verifies $\hrm{Art}^{-1}(\hrm{W}_{\qp}^{\hrm{ab}})=\qptimes$, where $\hrm{W}_{\qp}$ is the Weil group.
	
	We denote by $\varepsilon$ the smooth character of $\qptimes$ defined by $$\varepsilon \,  : \, \qptimes\rightarrow \mathbb{F}_p^{\times}, \,\,\, x p^n \mapsto (x \mod{p})$$ for $x\in \zptimes$. It extends to $\widehat{\qptimes}$.
	
	We denote by $\omega$ the smooth character of $\cg_{\qp}$ defined as the composition $$ \omega\, : \,\cg_{\qp} \rightarrow \cg_{\qp}^{\hrm{ab}} \xrightarrow{\hrm{Art}^{-1}} \widehat{\qptimes} \xrightarrow{\varepsilon} \mathbb{F}_p^{\times}.$$

	For $G=\mathrm{GL}_n$, we keep the notation $\overline{L}^{\otimes}$ introduced in \cite[\S 2.1]{breuil2021conjectures} for the antisymmetric powers of the standard representations over $\mathbb{F}$.
	
	For a profinite group $G$ and a discrete field $k$, let $\hrm{Rep}_k \, G$ denote the category of smooth representatons of $G$ over $k$. When $G=\gl{n}{\qp}$, locally profinite, we add to the definition being admissible, finite length, with action of the center $Z(G)$ by homotheties.
	\vspace{0.3 cm}
	
	\subsection*{Acknowledgments} I wrote this article during my postdoctoral fellowship at the University of Münster, which provided me with funding for travel, enabling me to have face-to-face discussions with some of the aforementioned researchers. Firstly, I would like to express my gratitude to Stefano Morra, with whom I had a fruitful exchanges on this topic. I am also grateful to Gergely Zábradi for spotting a (gross) mistake in the first version of this note and for his glimpse on the final version. I would also like to thank Christophe Breuil for reading the final version of this work. Finally, I would like to thank Gaëtan Chenevier and Eugen Hellmann for blackboard discussions that helped to refine some arguments in this article.
	
	\vspace{0.75 cm}
	
	\tableofcontents
	
	\vspace{0.75 cm}	

	\section{Recollection about Z\'abr\'adi's functor}

	Owing to the intended shortness of this note, we restrict our definitions and recollection to the minimum. The first ingredient of $\mathbf{V}_{\Delta}$ is the multivariable Fontaine equivalence proven in \cite{zabradi_equiv}. 
	
	Let $\Delta$ be an (abstract for now) finite set.

	\begin{defi}
		When $\hbb{F}$ is finite, we define the rings $$E_{\Delta}^+:= \mathbb{F}\llbracket X_{\alpha}\, |\, \alpha \in \Delta\rrbracket \,\,\,\, \text{and}\,\,\,\, E_{\Delta}:=E_{\Delta}^+ [X_{\Delta}^{-1}],$$ where $X_{\Delta}=\prod X_{\alpha}$ and equips it with an $\hbb{F}$-linear\footnote{As $\hbb{F}$ keeps track of the representations' coefficients, the action of $\Phi_{\Delta,p}$ must fix it.} action of $$(\Phi_{\Delta,q}\times \Gamma_{\qp,\Delta}):= \prod_{\alpha \in \Delta} \varphi_{\alpha}^{\N}\times \prod_{\alpha \in \Delta} \zptimes$$ that verifies $$\forall \varphi=(\varphi_{\alpha}^{n_{\alpha}})_{\alpha}, \, \forall \beta \in \Delta, \,\, \varphi(X_{\beta})=X_{\alpha}^{p^{n_{\alpha}}}$$ $$\forall \gamma=(\gamma_{\alpha})_{\alpha} \in \Gamma_{\qp,\Delta}, \,  \forall \beta \in \Delta, \,\, \gamma(X_{\beta})=(1+X_{\beta})^{\gamma_{\beta}}-1.$$ In general, we define $E_{\Delta}^+$ and $E_{\Delta}$ as the inductive limit of these rings over finite subfields of $\hbb{F}$. 
		
		Let  $E:= \hbb{F}_p(\!(X)\!)$. For each $\alpha$, fix a copy $E_{\alpha}:= \hbb{F}_p(\!(X_{\alpha})\!)$ of $E$, and an choose an extension of the identification to $E_{\alpha}^{\sep} \simeq E^{\sep}$, giving an identification $\cg_{E_{\alpha}}\simeq\cg_E$ then to $\ch_{\qp}$. Each $E_{\alpha}$ has an embedding into $E_{\Delta}$. We define $$E_{\Delta}^{\sep}:= E_{\alpha}^{\sep} \otimes_{E_{\alpha}} \Big(\ldots\Big( E_{\beta}^{\sep} \otimes_{E_{\beta}} E_{\Delta}\Big)\Big)$$ As in \cite[\S 3.1]{zabradi_equiv}, it comes with an action of $(\Phi_{\Delta,q}\times \cg_{\qp,\Delta})$. The action of $\ch_{\qp,\Delta}$ is given by the above isomorphisms $\cg_{E_{\alpha}} \simeq \ch_{\qp}$. 
		
		Thanks to \cite[Lem. 3.6]{zabradi_equiv}, we have the identity $$(E_{\Delta}^{\sep})^{\Phi_{\Delta,p}}=\hbb{F}.$$ We also define $\ch_{\qp,\Delta}:= \prod \cg_{\qp(\mu_{p^{\infty}})}< \cg_{\qp,\Delta}$, with quotient canoncially isomorphic to $\Gamma_{\qp,\Delta}$. Thanks to \cite[Prop. 3.3]{zabradi_equiv}, we have an  $(\Phi_{\Delta,p}\times \Gamma_{\qp,\Delta})$-equivariant identification of rings: $$(E_{\Delta^{\sep}})^{\ch_{\qp,\Delta}}= E_{\Delta}.$$
	\end{defi}
	\vspace{0.1 cm}
	\begin{defi}[See {\cite[\S 2.2]{marquis_formalisme}}]
		For a monoid $\hcal{S}$ acting on a ring $R$, we define $\detaleproj{\hcal{S}}{R}$ to be the category of finite projective $R$-module $D$ of constant rank, equipped with a semilinear action of $\hcal{S}$ such that $$\forall s\in \hcal{S}, \,\,  R\otimes_{s,R} D \xrightarrow{\hrm{Id} \otimes s} D$$ is an isomorphism.
	\end{defi}
	
	After this setup, we can state the multivariable cyclotomic Fontaine equivalence.
	
	\begin{theo}
		The following pair of functors \begin{align*} \mathbb{D}_{\Delta}\, :&& \hrm{Rep}_{\mathbb{F}_p}\, \cg_{\qp,\Delta} & \rightleftarrows \detaleproj{\Phi_{\Delta,p}\times \Gamma_{\Delta}}{E_{\Delta}}\, &&: \, \mathbb{V}_{\Delta} \\ &&V & \mapsto \left(E_{\Delta}^{\sep} \otimes_{\hbb{F}} V\right)^{\ch_{\qp,\Delta}} \\&& \left(E_{\Delta}^{\sep}\otimes_{E_{\Delta}} D\right)^{\Phi_{\Delta,p}} & \mapsfrom D \end{align*} are well defined. They are quasi-inverse to one another, giving an equivalence of symmetric monoidal closed categories.
	\end{theo}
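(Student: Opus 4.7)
The strategy is to bootstrap from the classical one-variable cyclotomic Fontaine equivalence by induction on $|\Delta|$, with base case $|\Delta|=1$ given by Fontaine's theorem. The inductive step relies on the associativity decomposition
$$E_{\Delta}^{\sep} \simeq E_{\alpha_0}^{\sep} \otimes_{E_{\alpha_0}} \bigl(E_{\Delta'}^{\sep} \otimes_{E_{\Delta'}} E_{\Delta}\bigr),$$
for any chosen $\alpha_0 \in \Delta$ and $\Delta':= \Delta \setminus \{\alpha_0\}$, which, as is implicit in the iterated construction of $E_{\Delta}^{\sep}$ recalled in the excerpt, is equivariant for the action of the $\alpha_0$-factor of $\cg_{\qp,\Delta}$ on the leftmost factor and of $\cg_{\qp,\Delta'}$ on the innermost one.

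To check well-definedness, given $V \in \hrm{Rep}_{\mathbb{F}_p} \cg_{\qp,\Delta}$, I would first apply the one-variable Fontaine functor with respect to the $\alpha_0$-factor to form $D_{\alpha_0}(V) := (E_{\alpha_0}^{\sep} \otimes_{\mathbb{F}_p} V)^{\ch_{\qp,\alpha_0}}$, which is a finite projective étale $(\varphi_{\alpha_0},\Gamma_{\alpha_0})$-module over $E_{\alpha_0}$ carrying a commuting smooth action of $\cg_{\qp,\Delta'}$. Feeding this into the inductive hypothesis for $\mathbb{D}_{\Delta'}$, taken with coefficients in such one-variable modules instead of $\mathbb{F}_p$-vector spaces, yields a finite projective étale $(\Phi_{\Delta,p}\times\Gamma_{\Delta})$-module over $E_{\Delta}$ canonically isomorphic to $\mathbb{D}_{\Delta}(V)$ via the displayed decomposition. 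The construction for $\mathbb{V}_{\Delta}$ proceeds symmetrically.

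For the quasi-inverse property, the unit $V \to \mathbb{V}_{\Delta} \circ \mathbb{D}_{\Delta}(V)$ and counit $\mathbb{D}_{\Delta} \circ \mathbb{V}_{\Delta}(D) \to D$ come from the natural inclusion $V \hookrightarrow E_{\Delta}^{\sep}\otimes_{\mathbb{F}_p} V$ and the natural evaluation map. I would prove they are isomorphisms by decomposing each through the same iterated devissage and reducing to the one-variable case at each step; the global identities $(E_{\Delta}^{\sep})^{\Phi_{\Delta,p}}=\hbb{F}$ and $(E_{\Delta}^{\sep})^{\ch_{\qp,\Delta}}=E_{\Delta}$ recorded in the definition guarantee that the output is independent of the order of the devissage.

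The main obstacle, beyond the notational bookkeeping, is to verify that the multivariable étale and finite-projective conditions correctly decompose into their one-variable specializations. Concretely, one needs étaleness for the full monoid $\Phi_{\Delta,p}=\prod_{\alpha}\varphi_{\alpha}^{\N}$ to be equivalent to étaleness for each factor $\varphi_{\alpha}^{\N}$---which uses that the $\varphi_{\alpha}$ commute and that tensor products of étale structures remain étale---and that ``finite projective of constant rank'' is preserved through each one-variable Fontaine step. The symmetric monoidal closed structure is then essentially formal: both functors commute with finite tensor products and internal Homs by flatness of $E_{\Delta}^{\sep}$ over $\mathbb{F}_p$ and $E_{\Delta}$, while invariants under the profinite group $\ch_{\qp,\Delta}$ commute with finite tensor products on finite-dimensional objects.
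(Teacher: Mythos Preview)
The paper does not give an independent proof of this theorem: it is stated as a recollection, and the subsequent remark only explains how to pass from \cite[Th.~3.15]{zabradi_equiv} (which is the $\mathbb{F}_p$-coefficient statement) to the form needed here, via three observations (transporting an $\mathbb{F}$-linear structure as extra endomorphisms, writing the general-$\mathbb{F}$ categories as filtered colimits over finite subfields, and noting that projectivity is automatic by \cite[Prop.~2.2]{zabradi_equiv} and \cite[Prop.~6.1.7]{phd_marquis}). Your proposal, by contrast, sketches a proof of the equivalence itself from scratch. That inductive-on-$|\Delta|$ strategy, peeling off one variable at a time via the decomposition of $E_{\Delta}^{\sep}$, is precisely the architecture of Z\'abr\'adi's original argument in \cite{zabradi_equiv}, so in spirit you are reproducing the cited proof rather than the paper's contribution.

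There is one genuine soft spot in your sketch. When you write ``feeding this into the inductive hypothesis for $\mathbb{D}_{\Delta'}$, taken with coefficients in such one-variable modules instead of $\mathbb{F}_p$-vector spaces'', you are invoking a statement strictly stronger than your inductive hypothesis: the hypothesis gives you the equivalence for $\mathbb{F}_p$-representations of $\cg_{\qp,\Delta'}$, not for $E_{\alpha_0}$-semilinear objects with a $(\varphi_{\alpha_0},\Gamma_{\alpha_0})$-structure. Z\'abr\'adi circumvents this by instead proving the \emph{comparison isomorphism} $E_{\Delta}^{\sep}\otimes_{\mathbb{F}_p} V \simeq E_{\Delta}^{\sep}\otimes_{E_{\Delta}} \mathbb{D}_{\Delta}(V)$ inductively (this is \cite[Prop.~3.7]{zabradi_equiv}), and only afterwards reading off the equivalence by taking $\Phi_{\Delta,p}$- and $\ch_{\qp,\Delta}$-invariants using the identities you quote. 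Your d\'evissage can be made to work, but you should either strengthen the inductive statement to a relative version over an auxiliary base, or reorganise the induction around the comparison isomorphism; as written, the step does not close. The remaining claims (commutation of invariants with tensor products, flatness of $E_{\Delta}^{\sep}$ over $E_{\Delta}$) are also non-formal and are established separately in \cite{zabradi_equiv}.
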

	
	\begin{rem}
		To pass from \cite[Th. 3.15]{zabradi_equiv} to this theorem, one need three more ideas. The first one is that, for $\hbb{F}$ finite, an object of $\hrm{Rep}_{\hbb{F}}\, \cg_{\qp,\Delta}$ is an object of $\hrm{Rep}_{\hbb{F}_p} \,\cg_{\qp,\Delta}$ with additional endomorphisms coding the external multiplication by $\hbb{F}$. The same is true for $\detaleproj{\Phi_{\Delta,q}\times \cg_{\qp,\Delta}}{E_{\Delta}}$ compared to its version over $\hbb{F}_p$. These structure are transfered on both side of Z\'abr\'adi's equivalence. This suffices when $\hbb{F}$ is finite.
			
		For general $\hbb{F}$, one need to show that $\hrm{Rep}_{\hbb{F}} \,\cg_{\qp,\Delta}$ and $\detaleproj{\Phi_{\Delta,q}\times \cg_{\qp,\Delta}}{E_{\Delta}}$ are colimits along base change of the analogous categories for $\hbb{F}'$ varying among finite subfields of $\hbb{F}$. For the Galois representations, its a general fact for a smooth representation of a compact group. For $(\varphi,\Gamma)$-modules, notice that $(\Phi_{\Delta,p}\times \Gamma_{\qp,\Delta})$ is topologically finitely generated and that its action on a multivariable \linebreak$(\varphi,\Gamma)$-module is automatically continuous for some explicit topology (see \cite[Prop. 6.1.7]{phd_marquis}).
		
		The last idea is that, according to \cite[Prop. 2.2]{zabradi_equiv} \cite[Prop. 6.1.7]{phd_marquis}, the projectivity condition is automatic.
	\end{rem}
	
	We know prove a small useful fact that we did not see properly written anywhere.
	
	\begin{prop}\label{vdelta_sqcup}
		Let $\Delta=\sqcup_{i\in I} \Delta_i$ be a partition of a finite set. Let $D_i \in \detaleproj{\Phi_{\Delta_i,p}\times \Gamma_{\qp,\Delta_i}}{E_{\Delta_i}}$. Then $$D:=\otimes_{i\in I, \,E_{\Delta}} (E_{\Delta}\otimes_{E_{\Delta_i}} D_i)$$ belongs to $\detaleproj{\Phi_{\Delta,p}\times \Gamma_{\qp,\Delta}}{E_{\Delta}}$ and naturally $$\mathbb{V}_{\Delta}(D)\simeq \boxtimes_{i\in I}\mathbb{V}_{\Delta_i}(D_i).$$
	\end{prop}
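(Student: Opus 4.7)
The plan is to proceed by induction on $|I|$, reducing to the case $\Delta = \Delta_1 \sqcup \Delta_2$. Set $\tilde{D}_i := E_\Delta \otimes_{E_{\Delta_i}} D_i$, equipped with the action of $\Phi_{\Delta,p} \times \Gamma_{\qp,\Delta}$ for which the operators indexed by $\Delta \setminus \Delta_i$ act solely through the coefficient ring $E_\Delta$. First I would verify that $\tilde{D}_i$ belongs to $\detaleproj{\Phi_{\Delta,p}\times \Gamma_{\qp,\Delta}}{E_{\Delta}}$: finite projectivity of constant rank transports under the flat base change $E_{\Delta_i} \hookrightarrow E_\Delta$; the étale condition for operators indexed by $\Delta_i$ is inherited from $D_i$ by base change; for operators indexed by $\Delta \setminus \Delta_i$ it reduces to the trivial identity on $E_\Delta$ acting semilinearly on itself. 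Then $D = \tilde{D}_1 \otimes_{E_\Delta} \tilde{D}_2$ is étale as a tensor product of étale modules over the common base with pairwise commuting actions.

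For the identification with the box product, since $\mathbb{V}_\Delta$ is an equivalence with quasi-inverse $\mathbb{D}_\Delta$, it suffices to exhibit a natural isomorphism $\mathbb{D}_\Delta(V_1 \boxtimes V_2) \simeq D$ for $V_i := \mathbb{V}_{\Delta_i}(D_i)$. The structural input I would establish first is
$$E_\Delta^{\sep} \simeq E_{\Delta_1}^{\sep} \otimes_{E_{\Delta_1}} E_\Delta \otimes_{E_{\Delta_2}} E_{\Delta_2}^{\sep},$$
obtained by observing that the successive separable closures defining $E_\Delta^{\sep}$ can be reordered, grouping the $\Delta_1$-variables first and the $\Delta_2$-variables last. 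Under this identification, $\ch_{\qp,\Delta_i}$ acts only on the factor $E_{\Delta_i}^{\sep}$ and the two actions commute. Substituting:
$$E_\Delta^{\sep} \otimes_{\mathbb{F}} V_1 \otimes_\mathbb{F} V_2 \simeq (E_{\Delta_1}^{\sep} \otimes_\mathbb{F} V_1) \otimes_{E_{\Delta_1}} E_\Delta \otimes_{E_{\Delta_2}} (E_{\Delta_2}^{\sep} \otimes_\mathbb{F} V_2),$$
and taking $\ch_{\qp,\Delta_1} \times \ch_{\qp,\Delta_2}$-invariants, which commutes with the two outer flat tensor products over $E_{\Delta_1}$ and $E_{\Delta_2}$, yields $D_1 \otimes_{E_{\Delta_1}} E_\Delta \otimes_{E_{\Delta_2}} D_2 = \tilde{D}_1 \otimes_{E_\Delta} \tilde{D}_2 = D$, naturally in $D_1, D_2$ and equivariantly for $\Phi_{\Delta,p} \times \Gamma_{\qp,\Delta}$.

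The main obstacle will be justifying carefully the structural identification of $E_\Delta^{\sep}$: it amounts to noting that separable closure in distinct variables commutes up to canonical isomorphism, by an argument parallel to \cite[Prop. 3.3]{zabradi_equiv} applied one variable at a time, together with verifying the flatness properties needed to commute invariants with tensor products (so that the iterated invariants computation distributes over the factorisation).
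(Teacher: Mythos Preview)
Your approach is correct and rests on the same structural identification of $E_\Delta^{\sep}$ as the paper's proof (which cites \cite[Lem.~3.2]{zabradi_equiv} for it). The genuine difference is the direction in which you run the equivalence: you compute $\mathbb{D}_\Delta(V_1\boxtimes V_2)$ by taking $\ch_{\qp,\Delta}$-invariants, whereas the paper computes $\mathbb{V}_\Delta(D)$ by first inserting the comparison isomorphisms $E_{\Delta_i}^{\sep}\otimes_{E_{\Delta_i}} D_i \simeq E_{\Delta_i}^{\sep}\otimes_{\mathbb{F}} \mathbb{V}_{\Delta_i}(D_i)$ and then taking $\Phi_{\Delta,p}$-invariants. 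The paper's route is lighter at the final step: once one has $E_\Delta^{\sep}\otimes_{E_\Delta} D \simeq E_\Delta^{\sep}\otimes_{\mathbb{F}} W$ with $W$ a finite $\mathbb{F}$-vector space carrying the trivial $\Phi$-action, the identity $(E_\Delta^{\sep})^{\Phi_{\Delta,p}}=\mathbb{F}$ gives the result instantly, with no base-change subtlety.

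By contrast, your key step ``taking $\ch_{\qp,\Delta_1}\times\ch_{\qp,\Delta_2}$-invariants commutes with the two outer flat tensor products'' is true but not for the reason you give. Flatness of $M$ over $E_{\Delta_1}$ lets you commute $(-)\otimes_{E_{\Delta_1}} M$ with kernels of maps between \emph{finite} products, but $\ch_{\qp,\Delta_1}$ is profinite, so the naive description of invariants as a kernel into an infinite product does not immediately base-change. What actually makes your step work is descent: $E_{\Delta_1}^{\sep}$ is a filtered colimit of finite \'etale Galois $E_{\Delta_1}$-algebras, for each of which the commutation holds by finite \'etale descent; alternatively, insert the comparison isomorphism to reduce to showing $(E_{\Delta_1}^{\sep}\otimes_{E_{\Delta_1}} N)^{\ch_{\qp,\Delta_1}}=N$, which is the module form of \cite[Prop.~3.3]{zabradi_equiv}. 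Either fix is routine, but you should name it rather than invoke flatness alone. The paper also reduces first to $\mathbb{F}=\mathbb{F}_p$, which you may want to mention for the same bookkeeping reasons.
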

	\begin{proof}
		View $(\Phi_{\Delta_i,p}\times \Gamma_{\qp,\Delta_i})$ as a quotient of $(\Phi_{\Delta,p}\times \Gamma_{\qp,\Delta})$. Then $E_{\Delta_i} \rightarrow E_{\Delta}$ is $(\Phi_{\Delta,p}\times \Gamma_{\qp,\Delta})$ for action by inflation on the first term. To prove that $D$ is an étale projective module, combine \cite[Prop. 3.3 1), Prop 2.15 4)]{marquis_formalisme}.
		
		We first prove the identification for $\mathbb{F}=\mathbb{F}_p$ by expanding the strategy of \cite[Prop. 6.2.1]{phd_marquis}. By passing to the limit \cite[Lem. 3.2]{zabradi_equiv} and using it for $\Delta$, each $\Delta_i$: $$E_{\Delta}^{\sep}\simeq\bigotimes_{i\in I, \,E_{\Delta}}\big(E_{\Delta_i}^{\sep}\otimes_{E_{\Delta_i}} E_{\Delta}\big).$$ Hence $$E_{\Delta}^{\sep}\otimes_{E_{\Delta}} D\simeq \bigotimes_{i\in I, \,E_{\Delta}^{\sep}}\Big(E_{\Delta}^{\sep}\otimes_{E_{\Delta_i}^{\sep}} \big(E_{\Delta_i}^{\sep} \otimes_{E_{\Delta_i}} D_i\big)\Big).$$ Combining \cite[Prop. 3.7, Th. 3.15]{zabradi_equiv} we get comparison isomorphisms for each $D_i$, translating into $$E_{\Delta}^{\sep}\otimes_{E_{\Delta}} D \simeq\bigotimes_{i\in I, \,E_{\Delta}^{\sep}}\Big(E_{\Delta}^{\sep}\otimes_{E_{\Delta_i}^{\sep}} \big(E_{\Delta_i}^{\sep} \otimes_{\mathbb{F}_p} \mathbb{V}_{\Delta_i}(D_i)\big)\Big)\simeq E_{\Delta}^{\sep} \otimes_{\mathbb{F}_p} \Big(\mathop{\otimes} \limits_{i\in I, \, \mathbb{F}_p} \mathbb{V}_{\Delta_i}(D_i)\Big).$$ One can check that each of our isomorphisms is $(\Phi_{\Delta,p} \times \cg_{E,\Delta})$-equivariant. Because $\boxtimes\, \mathbb{V}_{\Delta_i}(D_i)$ is free with trivial $\Phi_{\Delta,p}$-action (see \cite[Lem. 3.6]{zabradi_equiv}), one obtains the result by taking $\Phi_{\Delta,p}$-invariants.
	\end{proof}

	\begin{coro}
		For $\Delta_1\subset \Delta$ and $D_1\in \detaleproj{\Phi_{\Delta_1,p}\times \Gamma_{\qp,\Delta_1}}{E_{\Delta_1}}$, $$\mathbb{V}_{\Delta}\big(E_{\Delta}\otimes_{E_{\Delta_1}} D_1\big)= \hrm{Inf}_{\cg_{\qp,\Delta_1}}^{\cg_{\qp,\Delta}} \mathbb{V}_{\Delta_1}(D).$$
	\end{coro}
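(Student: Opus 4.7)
The plan is to reduce the corollary to the proposition by choosing a trivial second factor that extends $\Delta_1$ to all of $\Delta$. Set $\Delta_2 := \Delta \setminus \Delta_1$, so that $\Delta = \Delta_1 \sqcup \Delta_2$ is a partition, and take $D_2 := E_{\Delta_2}$ viewed as the unit object in $\detaleproj{\Phi_{\Delta_2,p}\times \Gamma_{\qp,\Delta_2}}{E_{\Delta_2}}$ (with the trivial semilinear action). Then $E_{\Delta}\otimes_{E_{\Delta_2}} D_2 = E_{\Delta}$, and the tensor product appearing in the proposition simplifies to
$$\big(E_{\Delta}\otimes_{E_{\Delta_1}} D_1\big)\otimes_{E_{\Delta}} E_{\Delta}\;\simeq\; E_{\Delta}\otimes_{E_{\Delta_1}} D_1.$$

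Now I would apply Proposition \ref{vdelta_sqcup} to the pair $(D_1, D_2)$. It yields
$$\mathbb{V}_{\Delta}\big(E_{\Delta}\otimes_{E_{\Delta_1}} D_1\big)\;\simeq\; \mathbb{V}_{\Delta_1}(D_1)\boxtimes \mathbb{V}_{\Delta_2}(E_{\Delta_2}).$$
The last computation needed is $\mathbb{V}_{\Delta_2}(E_{\Delta_2}) \simeq \mathbb{F}_p$ with trivial $\cg_{\qp,\Delta_2}$-action; this is immediate from the formula $\mathbb{V}_{\Delta_2}(D) = (E_{\Delta_2}^{\sep}\otimes_{E_{\Delta_2}} D)^{\Phi_{\Delta_2,p}}$ together with the identity $(E_{\Delta_2}^{\sep})^{\Phi_{\Delta_2,p}} = \mathbb{F}_p$ recalled in the first definition. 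Box-producting with a trivial representation on the $\Delta_2$-factor is by definition the inflation from $\cg_{\qp,\Delta_1}$ along the projection $\cg_{\qp,\Delta}=\cg_{\qp,\Delta_1}\times \cg_{\qp,\Delta_2}\twoheadrightarrow \cg_{\qp,\Delta_1}$, which yields exactly $\mathrm{Inf}_{\cg_{\qp,\Delta_1}}^{\cg_{\qp,\Delta}} \mathbb{V}_{\Delta_1}(D_1)$.

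There is no substantive obstacle here: all the work has been done in Proposition \ref{vdelta_sqcup}, and the only things to check are the trivial-factor computation and the matching of the box product with inflation, both of which are formal. The one point requiring a moment's care is verifying that the isomorphism produced by the proposition is indeed $\cg_{\qp,\Delta}$-equivariant for the product Galois action (so that one can legitimately identify the box product with an inflation), but this is built into the equivariance statements in the proof of Proposition \ref{vdelta_sqcup}.
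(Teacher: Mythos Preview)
Your proof is correct and follows exactly the same route as the paper's: apply Proposition~\ref{vdelta_sqcup} with $I=\{1,2\}$, $\Delta_2=\Delta\setminus\Delta_1$, and $D_2=E_{\Delta_2}$. One minor slip: the Frobenius invariants give $(E_{\Delta_2}^{\sep})^{\Phi_{\Delta_2,p}}=\mathbb{F}$, not $\mathbb{F}_p$, but this changes nothing in the argument.
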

	\begin{proof}
		Apply for $I=\{1,2\}$, $\Delta_2=\Delta\backslash \Delta_1$ and $D_2=E_{\Delta_2}$.
	\end{proof}
	
	\vspace{0.3 cm}
	The other piece of Z\'abr\'adi's functor is an analogue of Colmez's $\mathbf{D}^{\vee}$. From now on, the set $\Delta$ is the canonical simple roots of the root system for the Lie group $\gl{n}{\qp}$ obtained from the torus of diagonal matrices. It verifies $\# \Delta=n-1$.

	\begin{defi}
		When $\hbb{F}$ finite, the functor $\mathbf{D}_{\Delta}^{\vee}$ is the one introduced in \cite{zabradi_gln}. It takes a smooth finite length\footnote{As we don't know if all irreducible are admissible, note that it vanishes on non admissible representations.} representation of $\gl{n}{\qp}$ over $\hbb{F}$ with central character and produces an étale multivariable cyclotomic $(\varphi,\Gamma)$-module with $\Delta$-variables.
		
		In general, as any smooth representation $\Pi$ of $\gl{n}{\qp}$ over $\hbb{F}$ is defined over a finite field\footnote{Use that $\gl{n}{\qp}$ is generated by $\gl{n}{\zp}$ and a finite number of elements.} We can therefore apply the references over all its sufficiently large finite subfields., we define $$\mathbf{D}_{\Delta}^{\vee}(\Pi):= \lim \limits_{\substack{\longleftarrow\\ \Pi'/ \hbb{F}_q \,\, \text{s.t.}\\ \Pi=\hbb{F}\otimes_{\hbb{F}_q} \Pi'}} \hbb{F}\otimes_{\hbb{F}_q} \mathbf{D}_{\Delta}^{\vee}(\Pi')$$ with the above definition for $\mathbf{D}_{\Delta}^{\vee}(\Pi')$. One verifies that the category is filtered and all the transition maps in the limit are isomorphisms.
	\end{defi}
	
	\vspace{0.3 cm}
	
	Let's combine it with the Fontaine equivalence.

	\begin{defi}
		Let $\delta^{\boxtimes}_{\Delta}$ be the character $\boxtimes_{i \in \Delta} \omega^{\sum_{j=n-i}^{n-1} j}$ of $\cg_{\qp}^{n-1}$, whose restriction to the diagonal embedding is $\delta_{\mathrm{GL_n}}$ from \cite[p. 23]{breuil2021conjectures}. We define $\mathbb{V}_{\Delta}^*:= \underline{\hrm{Hom}}(\mathbb{V}_{\Delta}, \delta^{\boxtimes}_{\Delta})$. This could be considered as a good replacement for Tate duality.
	\end{defi}

	\begin{defi}
		We define $\mathbf{V}_{\Delta}:= \mathbb{V}_{\Delta}^*\circ \mathbf{D}_{\Delta}^{\vee}$. It's a contravariant functor from the category $\hrm{Rep}_{\hbb{F}} \,\gl{n}{\qp}$ to $\hrm{Rep}_{\hbb{F}} \, \cg_{\qp,\Delta}$.
		
		We call $\mathbf{V}_{\Delta,\xi}$ for the restriction of $\mathbf{V}_{\Delta}$ to the diagonal embedding.
	\end{defi}

Recall three important theorems about Z\'abr\'adi's functor.
	
	\begin{theo}\label{exact_PS}[Th. C and G in \cite{zabradi_gln}]
		The functor $\mathbf{D}_{\Delta}^{\vee}$ is right exact. It is exact on the subcategory $\hrm{SP}$ of representations whose Jordan-Hölder factors are subquotients of principal series\footnote{The principal series are $\hrm{Ind}_{B(\qp)}^{\hrm{GL}_n(\qp)} \chi$ for the standard Borel $B$ and $\chi$ ranging over characters of the standard torus.}. It is fully faithful on the subcategory $\hrm{SP}^0$ of representations whose Jordan-Hölder factors are irreducible principal series.
		
		Dual results holds for $\mathbf{V}_{\Delta}$ as both $\mathbb{V}_{\Delta}$ and $\und{\hrm{Hom}}$ are exact.
	\end{theo}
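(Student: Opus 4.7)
The plan is to bootstrap from Zábrádi's Theorems C and G in \cite{zabradi_gln}, which treat the case of a finite coefficient field, and then handle the passage to a general algebraic extension $\mathbb{F}/\mathbb{F}_p$ via base change. The second sentence (dual results for $\mathbf{V}_{\Delta}$) is a formal consequence of the first: $\mathbb{V}_{\Delta}$ is an equivalence of abelian categories, hence exact, and $\underline{\mathrm{Hom}}(-,\delta_{\Delta}^{\boxtimes})$ is exact because $\delta_{\Delta}^{\boxtimes}$ is a character and the objects on which we dualise are finite-dimensional $\mathbb{F}$-vector spaces. So I would only need to establish the three claims for $\mathbf{D}_{\Delta}^{\vee}$.

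For the reduction to finite coefficients, I would use the definition of $\mathbf{D}_{\Delta}^{\vee}$ over general $\mathbb{F}$ recalled just above the theorem statement. Any short exact sequence $0 \to \Pi_1 \to \Pi_2 \to \Pi_3 \to 0$ in $\hrm{Rep}_{\mathbb{F}}\gl{n}{\qp}$ is defined over some sufficiently large finite subfield $\mathbb{F}_q \subset \mathbb{F}$ (since the objects are of finite length and $\gl{n}{\qp}$ is topologically finitely generated, by the footnote in the definition of $\mathbf{D}_{\Delta}^{\vee}$). Applying Zábrádi's right exactness over $\mathbb{F}_q$ and then base-changing by $\mathbb{F}\otimes_{\mathbb{F}_q}-$, which is an exact functor on $\mathbb{F}_q$-modules, yields right exactness over $\mathbb{F}$; the compatibility of $\mathbf{D}_{\Delta}^{\vee}$ with such base change is built into the definition. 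The same reduction—combined with Zábrádi's Theorem G for (ii)—gives exactness on $\hrm{SP}$, using that the property of Jordan--Hölder factors being subquotients of principal series descends and ascends along finite base change (since principal series are built from characters of the torus, which are defined over finite fields after enlarging $\mathbb{F}_q$).

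For full faithfulness on $\hrm{SP}^0$, the same strategy works: a morphism $\Pi_1 \to \Pi_2$ between objects of $\hrm{SP}^0$ over $\mathbb{F}$ descends to a morphism of objects over some $\mathbb{F}_q$, and likewise a morphism between their images under $\mathbf{D}_{\Delta}^{\vee}$ descends to an étale $(\varphi,\Gamma)$-module morphism over $\mathbb{F}_q$ after enlarging $q$. Zábrádi's full faithfulness over finite fields then gives the bijectivity on $\mathrm{Hom}$-groups at the level of $\mathbb{F}_q$, and taking the filtered colimit preserves bijectivity.

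The main obstacle—really the only non-formal point—is justifying rigorously that the base-change behaviour of $\mathbf{D}_{\Delta}^{\vee}$ is compatible with short exact sequences and with internal $\mathrm{Hom}$'s, and that the subcategories $\hrm{SP}$ and $\hrm{SP}^0$ are stable under descent from $\mathbb{F}$ to a finite $\mathbb{F}_q$; once these compatibilities are granted, the whole statement is a formal colimit manipulation on top of \cite[Th.~C, Th.~G]{zabradi_gln}.
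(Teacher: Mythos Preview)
The paper does not give its own proof of this theorem: it is stated as a recollection, with the bracketed citation ``[Th.~C and G in \cite{zabradi_gln}]'' serving as the entire justification, together with the one-line remark that the dual results for $\mathbf{V}_{\Delta}$ follow from exactness of $\mathbb{V}_{\Delta}$ and $\underline{\mathrm{Hom}}$. Your proposal is therefore more detailed than the paper itself, and the reduction-to-finite-coefficients argument you sketch is the natural way to make the passage from Z\'abr\'adi's finite-field results to general algebraic $\mathbb{F}$ explicit---this passage is simply taken for granted in the paper, in light of the definition of $\mathbf{D}_{\Delta}^{\vee}$ over general $\mathbb{F}$ as a (constant) filtered limit of base changes.
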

	
	\begin{theo}\label{irred_or_zero}[Th. C in \cite{zabradi_finiteness}]
		If $\Pi$ is irreducible smooth, then $\mathbf{V}_{\Delta}(\Pi)$ is either zero or irreducible.
	\end{theo}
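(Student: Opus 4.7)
The plan is to translate the statement into an assertion on $\mathbf{D}_{\Delta}^{\vee}(\Pi)$. Since $\mathbb{V}_{\Delta}^*$ is an anti-equivalence (the equivalence $\mathbb{V}_{\Delta}$ post-composed with $\underline{\hrm{Hom}}(-,\delta^{\boxtimes}_{\Delta})$), proving that $\mathbf{V}_{\Delta}(\Pi)$ is zero or irreducible as a $\cg_{\qp,\Delta}$-representation amounts to proving that $\mathbf{D}_{\Delta}^{\vee}(\Pi)$ is either zero or simple in the category $\detaleproj{\Phi_{\Delta,p}\times \Gamma_{\qp,\Delta}}{E_{\Delta}}$. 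Note that the extra $T_{+,\Delta}$-structure only restricts the class of admissible submodules, so it can only reinforce simplicity.

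The key tool should be the $\gl{n}{\qp}$-equivariant sheaf $\mathcal{F}_{\Pi}$ on the flag variety $\gl{n}{\qp}/B(\qp)$ built in \cite{zabradi_gln}, whose value on the open Bruhat cell recovers $\mathbf{D}_{\Delta}^{\vee}(\Pi)$, the $T_{+,\Delta}$-action encoding the contraction of that cell. Since $\Pi$ can be reconstructed, up to duality, from the $\gl{n}{\qp}$-representation of global sections of $\mathcal{F}_{\Pi}$, there is an order-preserving correspondence between $\gl{n}{\qp}$-equivariant subsheaves of $\mathcal{F}_{\Pi}$ and subrepresentations of $\Pi$.

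Assuming $\mathbf{D}_{\Delta}^{\vee}(\Pi)\neq 0$, I would argue by contradiction: let $D'\subsetneq \mathbf{D}_{\Delta}^{\vee}(\Pi)$ be a nonzero proper étale $(\Phi_{\Delta,p}\times \Gamma_{\qp,\Delta})$-submodule over $E_{\Delta}$. The goal is to propagate $D'$ to a $\gl{n}{\qp}$-equivariant subsheaf $\mathcal{F}'\subset \mathcal{F}_{\Pi}$ by translating along $\gl{n}{\qp}$ and gluing the translates on intersections of Bruhat cells. The global sections of $\mathcal{F}'$ would then form a proper nonzero subrepresentation of $\Pi$, contradicting its irreducibility.

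The main obstacle I expect lies in verifying that this gluing produces a well-defined subsheaf. Concretely, one has to check that $D'$ is stable under the additional data turning the pair $(\mathbf{D}_{\Delta}^{\vee}(\Pi), T_{+,\Delta})$ into an equivariant sheaf on \emph{all} Bruhat cells, a condition involving elements of $\gl{n}{\qp}$ outside the contracting semigroup, whose compatibility is governed by étaleness and projectivity of $D'$. In effect, one must show that a submodule carrying only the $(\Phi_{\Delta,p}\times \Gamma_{\qp,\Delta})$-structure automatically inherits the stronger equivariance, through a Hensel-type lifting along the étale action of positive elements; this is the technical heart of the argument and would pull significantly from the machinery of \cite{zabradi_finiteness}.
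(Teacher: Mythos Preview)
The paper does not give its own proof of this statement; it is quoted verbatim as a recollection from \cite{zabradi_finiteness} (Theorem~C there), so there is nothing in the present paper to compare your argument against.

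On the substance of your sketch: you have the right tool (the $G$-equivariant sheaf on $G/B$ built from $\mathbf{D}_{\Delta}^{\vee}(\Pi)$), but two of your structural claims are not correct as stated and would make the argument fail if taken literally. First, you assert an ``order-preserving correspondence between $\gl{n}{\qp}$-equivariant subsheaves of $\mathcal{F}_{\Pi}$ and subrepresentations of $\Pi$''. No such correspondence is available: what one has is a $G$-equivariant map from $\Pi^{\vee}$ to the global sections of the sheaf, not an identification, and in general the sheaf loses information about $\Pi$ (this is exactly why $\mathbf{D}_{\Delta}^{\vee}$ can vanish on nonzero irreducibles). Second, your propagation step proposes to show that an arbitrary \'etale $(\Phi_{\Delta,p}\times\Gamma_{\qp,\Delta})$-submodule $D'$ of $\mathbf{D}_{\Delta}^{\vee}(\Pi)$ is automatically stable under the extra gluing data; there is no Hensel-type mechanism that does this, and indeed no reason for a bare $(\varphi,\Gamma)$-submodule to be $T_{+,\Delta}$-stable, let alone to extend across Bruhat cells.

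The actual argument in \cite{zabradi_finiteness} runs in the dual direction and avoids both pitfalls: from any nonzero \emph{quotient} $D$ of $\mathbf{D}_{\Delta}^{\vee}(\Pi)$ one builds the associated $G$-equivariant sheaf on $G/B$ directly from $D$ (so no propagation problem arises), obtains a nonzero $G$-map from $\Pi^{\vee}$ into its global sections, and uses irreducibility of $\Pi$ to force this map to be injective; one then reads off that the induced surjection $\mathbf{D}_{\Delta}^{\vee}(\Pi)\twoheadrightarrow D$ is an isomorphism. Your outline would become correct if rewritten along these lines.
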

	
	\begin{theo}\label{compat_induction}[Prop. 3.2 and Th. 3.5 in \cite{zabradi_gln}]
		Let $P$ be a standard parabolic of $\hrm{GL}_n$, whose Levi subgroup can be written as the diagonal $\prod_{1\leq t\leq s} \hrm{GL}_{m_s}$. Always chosing the root system associated with the standard torus, the intersection $\Delta_t$ of $\hrm{GL}_{m_t}$'s roots with $\Delta$ gives standard simple roots of $\hrm{GL}_{m_t}$. 
		
		For all $t$, let $\Pi_t$ be a smooth admissible representation of $\gl{m_t}{\qp}$ over $\hbb{F}$. We see $\boxtimes_t \Pi_t$ as a representation of $P^-(\qp)$ by inflation. Then,
		
		$$\mathbf{D}_{\Delta}^{\vee}\Big(\hrm{Ind}_{Q^-(\qp)}^{\hrm{GL}_n(\qp)} (\boxtimes_t\,  \Pi_t)\Big)\simeq \bigotimes_{1\leq t\leq s} \big( E_{\Delta}\otimes_{E_{\Delta_t}}\mathbf{D}_{\Delta_t}^{\vee}(\Pi_t)\big).$$
	\end{theo}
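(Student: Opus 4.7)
The plan is to unwind the construction of $\mathbf{D}_{\Delta}^{\vee}$ and to use the Iwahori factorisation of the positive unipotent to reduce the computation on an induced representation to its explicit behaviour on the big Bruhat cell. For clarity I set $\Delta_P := \Delta \setminus \bigsqcup_t \Delta_t$; this is precisely the set of simple roots whose root groups lie in the unipotent radical $N_P$ of $P$.

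First, by induction on $s$ and using that $P$ can be built as a sequence of maximal parabolic inclusions with compatible partitions of $\Delta$, one reduces to the case $s=2$ of a maximal parabolic. Next, recall that $\mathbf{D}_{\Delta}^{\vee}(\Pi)$ is obtained from a finitely generated, $N_0$-stable lattice in a suitable dual of $\Pi$, where $N_0 = \prod_{\alpha \in \Phi^+} N_{\alpha,0}$ is the Iwahori factorisation of a compact open of $N(\qp)$; one base-changes to $\hbb{F}\llbracket N_{0,\Delta}\rrbracket \simeq \hbb{F}\llbracket X_\alpha : \alpha\in \Delta\rrbracket$, inverts $X_\Delta$, and endows the result with the $\varphi_\alpha$ and $\Gamma_{\qp,\alpha}$ actions coming from the positive diagonal elements in $T_+$.

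Second, I would compute the left-hand side geometrically. Combining Iwasawa's decomposition $\hrm{GL}_n(\qp)=K\cdot P^-(\qp)$ with the Bruhat-Iwahori decomposition for $K = \hrm{GL}_n(\zp)$, the big cell $N_0 \cdot (P^-\cap K)$ is open in $K$. Restriction of a section of $\hrm{Ind}_{P^-(\qp)}^{\hrm{GL}_n(\qp)}(\boxtimes_t \Pi_t)$ to this open yields an identification of $N_0$-modules between the ``generic part'' and a space of compactly supported sections over $N_0$ with values in $\boxtimes_t \Pi_t$. Exploiting $N_0 = N_{P,0}\cdot \prod_t N_{M_t,0}$, this factorises the generic piece as a product of a trivial piece supported on $N_{P,0}$ and the $\Pi_t$-valued pieces supported on each $N_{M_t,0}$. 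Passing to $\mathbf{D}_{\Delta}^{\vee}$, the first factor contributes the ``free'' module $E_{\Delta_P}$, each $\Pi_t$-piece contributes $\mathbf{D}_{\Delta_t}^{\vee}(\Pi_t)$ by its very definition, and the outer tensor product re-assembles as the announced $\bigotimes_t\bigl(E_{\Delta}\otimes_{E_{\Delta_t}}\mathbf{D}_{\Delta_t}^{\vee}(\Pi_t)\bigr)$.

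The main obstacle is twofold. One must check that the contributions of the non-maximal Bruhat cells, which a priori contaminate the identification on $K$, vanish after applying $\mathbf{D}_{\Delta}^{\vee}$; this should follow from the fact that on these smaller cells the action of some $X_\alpha$ is locally nilpotent, hence killed after inverting $X_\Delta$. One must also verify equivariance with respect to $\Phi_{\Delta,p} \times \Gamma_{\qp,\Delta}$, tracking the action of the positive diagonal elements including the subtleties introduced by the modulus character of $P^-$ (which could, in principle, twist the comparison isomorphism); that this matches on the nose is a genuine feature of the $\hrm{GL}_n$-situation and is the content the reader is sent back to \cite{zabradi_gln} for.
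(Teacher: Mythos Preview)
The paper gives no proof of this theorem: it is stated purely as a recollection, with the bracketed reference to Prop.~3.2 and Th.~3.5 of \cite{zabradi_gln} standing in lieu of an argument. There is therefore nothing in the paper to compare your proposal against; what you have written is an attempt to reconstruct the content of the cited reference rather than of the present note.

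As a sketch of Z\'abr\'adi's argument your outline is in the right spirit: the open Bruhat cell, the Iwahori factorisation $N_0 = N_{P,0}\cdot\prod_t N_{M_t,0}$, and the killing of the smaller cells after inverting $X_{\Delta}$ are indeed the geometric inputs. Two caveats, however. First, the construction of $\mathbf{D}_{\Delta}^{\vee}$ in \cite{zabradi_gln} takes place on the Pontryagin dual $\Pi^{\vee}$ and involves choosing a generating $B_+$-subrepresentation, passing to $\hbb{F}\llbracket N_{\Delta,0}\rrbracket$, localising, and then intersecting images of iterated Frobenii; your description of ``base-change, invert, endow'' elides the delicate step that makes the result finitely generated and independent of choices, and it is precisely there that the compatibility with induction must be checked. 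Second, the reduction to $s=2$ by transitivity of parabolic induction is not how \cite{zabradi_gln} proceeds, and making it work would require knowing that $\mathbf{D}_{\Delta}^{\vee}$ already behaves well with respect to the intermediate inductions, which is close to what you are trying to prove. None of this is a genuine error, but your proposal is a plausibility sketch rather than a proof, and the paper's stance --- simply citing the source --- is the honest one.
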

	
	\begin{defi}\label{maps}
		We define a canonical surjection $E_{\Delta} \twoheadrightarrow E$ as being the quotient by the ideal \linebreak$(X_{\alpha}-X_{\beta}\, |\, \alpha,\beta \in \Delta)$, i.e. the identification of all variables to $X$. It is $(\varphi^{\N}\times\Gamma)$-equivariant for its action on $E_{\Delta}$ via the diagonal embedding into $(\Phi_{\Delta,p}\times \Gamma_{\qp,\Delta})$.
		
		 The tensor product of this canonical map with the identifications $E_{\alpha}^{\sep}\simeq E^{\sep}$ gives a surjection $E_{\Delta}^{\sep}\twoheadrightarrow E^{\sep}$. It is $(\varphi^{\N}\times\cg_{\qp})$-equivariant for its action on $E_{\Delta}$ via the diagonal embedding into $(\Phi_{\Delta,p}\times \cg_{\qp,\Delta})$.
	\end{defi}
	
	\begin{lemma}\label{res_identif}
		There is a natural isomorphism of functors from $\detaleproj{\Phi_{\Delta,p}\times \Gamma_{\qp,\Delta}}{E_{\Delta}}$ to $\hrm{Rep}_{\hbb{F}}\, \cg_{\qp}$ $$\hrm{Res}_{\cg_{\qp}}^{\cg_{\qp,\Delta}} \circ \hbb{V}_{\Delta} \Rightarrow \hbb{V}\left(E \otimes_{E_{\Delta}} -\right).$$ Here, the restriction is along the diagonal embedding and the tensor product is correctly defined using \cite[Prop. 3.3]{marquis_formalisme} for the $(\varphi^{\N}\times \Gamma)$-equivariant canonical map.
	\end{lemma}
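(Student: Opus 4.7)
The plan is to construct the natural transformation by tensoring the surjection $E_{\Delta}^{\sep}\twoheadrightarrow E^{\sep}$ of Definition \ref{maps} with $D$ over $E_{\Delta}$ and then taking invariants. For $D\in \detaleproj{\Phi_{\Delta,p}\times \Gamma_{\qp,\Delta}}{E_{\Delta}}$, the induced map
$$E_{\Delta}^{\sep}\otimes_{E_{\Delta}} D \twoheadrightarrow E^{\sep}\otimes_{E_{\Delta}} D \simeq E^{\sep}\otimes_E (E\otimes_{E_{\Delta}} D)$$
is $(\varphi^{\N}\times \cg_{\qp})$-equivariant for the diagonal action on the left. Since any $\Phi_{\Delta,p}$-invariant element is in particular fixed by the diagonal copy $\varphi^{\N}$, passing to invariants yields a natural $\cg_{\qp}$-equivariant map
$$\hrm{Res}_{\cg_{\qp}}^{\cg_{\qp,\Delta}} \mathbb{V}_{\Delta}(D) = (E_{\Delta}^{\sep}\otimes_{E_{\Delta}} D)^{\Phi_{\Delta,p}} \longrightarrow \left(E^{\sep}\otimes_E (E\otimes_{E_{\Delta}} D)\right)^{\varphi^{\N}} = \mathbb{V}(E\otimes_{E_{\Delta}} D).$$

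To show this is an isomorphism, I would invoke the comparison isomorphism built into the multivariable Fontaine equivalence (see \cite[Prop. 3.7, Th. 3.15]{zabradi_equiv}): there is a natural $(\Phi_{\Delta,p}\times \cg_{\qp,\Delta})$-equivariant identification $E_{\Delta}^{\sep}\otimes_{E_{\Delta}} D \simeq E_{\Delta}^{\sep} \otimes_{\hbb{F}} \mathbb{V}_{\Delta}(D)$, with $\Phi_{\Delta,p}$ trivial on the second factor. Reducing via Definition \ref{maps} produces a $(\varphi^{\N}\times \cg_{\qp})$-equivariant isomorphism
$$E^{\sep}\otimes_{E_{\Delta}} D \simeq E^{\sep} \otimes_{\hbb{F}} \hrm{Res}_{\cg_{\qp}}^{\cg_{\qp,\Delta}}\mathbb{V}_{\Delta}(D).$$
Taking $\varphi^{\N}$-invariants and using the one-variable analogue $(E^{\sep})^{\varphi^{\N}}=\hbb{F}$ of \cite[Lem. 3.6]{zabradi_equiv} yields an isomorphism $\mathbb{V}(E\otimes_{E_{\Delta}} D)\simeq \hrm{Res}_{\cg_{\qp}}^{\cg_{\qp,\Delta}} \mathbb{V}_{\Delta}(D)$; naturality of the comparison map then shows it coincides with the transformation constructed above.

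The main technical point is checking that $E\otimes_{E_{\Delta}} D$ is genuinely an étale $(\varphi,\Gamma)$-module, so that $\mathbb{V}$ may be applied to it, which is exactly the content of \cite[Prop. 3.3]{marquis_formalisme} applied to the $(\varphi^{\N}\times \Gamma)$-equivariant surjection $E_{\Delta}\twoheadrightarrow E$. A secondary bookkeeping point is matching the action of $\cg_{\qp}$ on both sides through the identifications $\cg_{E_{\alpha}}\simeq \ch_{\qp}$ fixed in the definition of $E_{\Delta}^{\sep}$: after reduction to $E^{\sep}$, the copies of $\cg_E$ are all glued together, which corresponds precisely to the diagonal restriction on the $\mathbb{V}_{\Delta}$ side. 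The colimit over finite subfields of $\hbb{F}$ is handled exactly as in the remark following the multivariable Fontaine equivalence.
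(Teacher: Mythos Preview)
Your proof is correct and follows essentially the same route as the paper's: construct the natural transformation by pushing the surjection $E_{\Delta}^{\sep}\twoheadrightarrow E^{\sep}$ of Definition \ref{maps} through the tensor product and passing to Frobenius invariants, then verify it is an isomorphism via the comparison isomorphism $E_{\Delta}^{\sep}\otimes_{E_{\Delta}} D \simeq E_{\Delta}^{\sep}\otimes_{\hbb{F}} \mathbb{V}_{\Delta}(D)$ together with $(E^{\sep})^{\varphi=\mathrm{Id}}=\hbb{F}$. The only cosmetic difference is that the paper forgets the $\cg_{\qp}$-action before invoking the comparison isomorphism and phrases the reduction as the single case $D=E_{\Delta}$, whereas you keep the equivariance throughout; both arguments are the same in substance.
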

	\begin{proof}
		Factorising $E_{\Delta}\twoheadrightarrow E\hookrightarrow E^{\sep}$ through $E_{\Delta}^{\sep}$, we obtain a natural map $$ \left(E_{\Delta}^{\sep}\otimes_{E_{\Delta}} D\right)^{\Phi_{\Delta,p}}\rightarrow \left(E^{\sep} \otimes_{E_{\Delta}^{\sep}} \left(E_{\Delta}^{\sep}\otimes_{E_{\Delta}} D\right)\right)^{\varphi=\hrm{Id}}.$$ By equivariance of this factorisation, it is a map of $\cg_{\qp}$-representations, hence identifies with the predicted natural transformation.
		
		To verify that it is a natural isomorphism, we can forget the action of $\cg_{\qp}$. The isomorphism of comparison, proved together with multivariable cyclotomic Fontaine equivalence, implies that $E_{\Delta}^{\sep}\otimes_{E_{\Delta}} D \simeq E_{\Delta}^{\oplus \hrm{rk} D}$ in $\detaleproj{\Phi_{\Delta,p}}{E_{\Delta}^{\sep}}$. Hence, proving that the map constructed above is an isomorphism boils down to the case $D=E_{\Delta}$, i.e. to the equality $(E_{\Delta}^{\sep})^{\Phi_{\Delta,p}}=(E^{\sep})^{\varphi=\hrm{Id}}=\hbb{F}$ and the $\hbb{F}$-linearity of all the maps in Définition \ref{maps}.
	\end{proof}
	
		\begin{coro}\label{inj_breuil_zab}
		For any smooth representation $\Pi$ of $\gl{n}{\qp}$, we have a natural injection \linebreak$\mathbf{V}_{\Delta,\xi}(\Pi)\hookrightarrow \mathbf{V}_{\xi}(\Pi)$.
	\end{coro}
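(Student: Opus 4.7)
The plan is to reduce, via Lemma \ref{res_identif}, to a statement about single-variable $(\varphi,\Gamma)$-modules, and then compare the relevant single-variable modules attached to $\Pi$ by Zábrádi and by Breuil. First I would observe that the restriction of $\delta_{\Delta}^{\boxtimes}$ along the diagonal embedding $\cg_{\qp}\hookrightarrow \cg_{\qp,\Delta}$ equals $\delta_{\mathrm{GL}_n}$ (this is noted in the definition of $\delta_{\Delta}^{\boxtimes}$), so that the restriction commutes with $\underline{\mathrm{Hom}}(-,\delta_{\Delta}^{\boxtimes})$ and gives $\underline{\mathrm{Hom}}(-,\delta_{\mathrm{GL}_n})$. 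Combined with Lemma \ref{res_identif}, this yields a natural identification
\[
\mathbf{V}_{\Delta,\xi}(\Pi) \;\simeq\; \underline{\mathrm{Hom}}\bigl(\mathbb{V}(E\otimes_{E_{\Delta}}\mathbf{D}_{\Delta}^{\vee}(\Pi)),\,\delta_{\mathrm{GL}_n}\bigr).
\]

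The problem thus reduces to exhibiting a natural surjection of étale $(\varphi,\Gamma)$-modules over $E$
\[
\mathbf{D}^{\vee}_{\mathrm{Br}}(\Pi) \;\twoheadrightarrow\; E\otimes_{E_{\Delta}}\mathbf{D}_{\Delta}^{\vee}(\Pi),
\]
where $\mathbf{D}^{\vee}_{\mathrm{Br}}(\Pi)$ denotes the single-variable module such that $\mathbf{V}_{\xi}(\Pi) = \underline{\mathrm{Hom}}(\mathbb{V}(\mathbf{D}^{\vee}_{\mathrm{Br}}(\Pi)),\delta_{\mathrm{GL}_n})$. Applying the exact contravariant functor $\underline{\mathrm{Hom}}(\mathbb{V}(-),\delta_{\mathrm{GL}_n})$ to such a surjection delivers the claimed injection $\mathbf{V}_{\Delta,\xi}(\Pi) \hookrightarrow \mathbf{V}_{\xi}(\Pi)$, and naturality in $\Pi$ is then automatic.

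Producing this surjection is the main obstacle. Both Zábrádi's module and Breuil's module are built from $\Pi^{\vee}$ by taking coinvariants under appropriate unipotent subgroups of $\mathrm{GL}_n(\qp)$; the simple-root variables $X_{\alpha}$ encode the filtration by commutator-layers of the multivariable unipotent, and the specialisation $E_{\Delta}\twoheadrightarrow E$ of Definition \ref{maps} identifies all these layers to a single one, which is precisely the datum Breuil keeps track of. I would first verify the existence and surjectivity of the comparison map on principal series by direct computation from Theorem \ref{compat_induction}, where both sides decompose tensorially, and then propagate it to arbitrary finite-length $\Pi\in\hrm{Rep}_{\hbb{F}}\,\gl{n}{\qp}$ by a dévissage using the right exactness of $\mathbf{D}_{\Delta}^{\vee}$ (Theorem \ref{exact_PS}) together with the analogous exactness statement for Breuil's functor. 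The most delicate point will be checking that the specialisation-of-variables map is well defined at the level of coinvariants (i.e. passes to the quotient) and lands surjectively onto Breuil's module, rather than into a strict submodule.
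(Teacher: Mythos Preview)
Your reduction is correct and matches the paper's: both identify $\mathbf{V}_{\Delta,\xi}(\Pi)$ with $\underline{\mathrm{Hom}}\bigl(\mathbb{V}(E\otimes_{E_{\Delta}}\mathbf{D}_{\Delta}^{\vee}(\Pi)),\delta_{\mathrm{GL}_n}\bigr)$ via Lemma~\ref{res_identif} and the compatibility of restriction with internal $\mathrm{Hom}$, and both recognise that the remaining content is a natural surjection of single-variable \'etale $(\varphi,\Gamma)$-modules
\[
\mathbf{D}_{\xi}^{\vee}(\Pi)\;\twoheadrightarrow\;E\otimes_{E_{\Delta}}\mathbf{D}_{\Delta}^{\vee}(\Pi).
\]

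The gap is in how you propose to obtain this surjection. Your d\'evissage plan --- check it on principal series via Theorem~\ref{compat_induction} and propagate by right exactness --- cannot work: an arbitrary finite-length $\Pi$ need not have all Jordan--H\"older factors among subquotients of principal series (supersingulars exist), so no d\'evissage from principal series reaches the general case. Moreover, d\'evissage lets you verify that an already-constructed natural transformation is surjective, but it does not construct the transformation; you still need a functorial map defined for every $\Pi$ simultaneously.

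The paper bypasses this entirely by citing \cite[Rem.~2 p.~13]{zabradi_gln} (together with the finiteness from \cite{zabradi_finiteness}), which gives the surjection directly and for all $\Pi$ from the very construction of $\mathbf{D}_{\Delta}^{\vee}$. Your closing intuition --- that the specialisation $E_{\Delta}\twoheadrightarrow E$ collapses the root-by-root unipotent coinvariants to Breuil's single-variable ones --- is exactly the content of that remark; you should invoke it rather than attempt the d\'evissage.
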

	\begin{proof}
		Using\footnote{And the finiteness proved in \cite{zabradi_finiteness}.} \cite[Rem. 2 p. 13]{zabradi_gln}, there is a natural surjection $$\mathbf{D}_{\xi}^{\vee}(\Pi) \twoheadrightarrow E \otimes_{E_{\Delta}} \mathbf{D}_{\Delta}^{\vee}(\Pi).$$ Let $\chi_{\hrm{GL}_n}$ be the $1$-dimensional $(\varphi,\Gamma)$-module such that $\mathbb{V}(\chi_{\hrm{GL}_n})=\delta_{\hrm{GL}_n}$. It is obtained by restriction to diagonal of the multivariable $(\varphi,\Gamma)$-module $\chi^{\boxtimes}_{\Delta}$ such that $\mathbb{V}_{\Delta}(\chi^{\boxtimes}_{\Delta})=\delta^{\boxtimes}_{\Delta}$.  We pass to duals and twist by $\chi_{\hrm{GL}_n}$: $$\und{\hrm{Hom}}(E\otimes_{E_{\Delta}} \mathbf{D}_{\Delta}^{\vee}(\Pi),\chi_{\hrm{GL}_n}) \hookrightarrow \und{\hrm{Hom}}(\mathbf{D}_{\xi}^{\vee}, \chi_{\hrm{GL}_n}).$$ 
		
		By \cite[Prop. 3.3 3)]{marquis_formalisme}, internal $\hrm{Hom}$ and base change commute, identifying the left part to \linebreak$E\otimes_{E_{\Delta}}\und{\hrm{Hom}}( \mathbf{D}_{\Delta}^{\vee}(\Pi),\chi^{\boxtimes}_{\Delta}) \simeq E\otimes_{E_{\Delta}} \mathbf{V}_{\Delta}(\Pi)$. We apply Fontaine's functor $\hbb{V}$ to the previous injection, still giving an injection thanks to its exactness. The left term is identified by Lemma \ref{res_identif} to $\mathbf{V}_{\Delta,\xi}(\Pi)$. The commutation of $\hbb{V}$ to internal Hom identifies the right term with $\mathbf{V}_{\xi}(\Pi)$.
	\end{proof}

	\vspace{0.75 cm}
	
	\section{Imprecise study of $\overline{L}^{\boxtimes}(\rho)$ and first failure}

	Our first result will rely on two ingredients: \cite[Th. C]{zabradi_finiteness} and counting Jordan-Hölder components.
	
	In this section, we fix an $n$-dimensional object $\rho$ of $\hrm{Rep}_{\hbb{F}} \,\cg_{\qp}$. We fix a composition serie for $\rho$ and write the successive irreducible pieces $(\rho_l)_{1\leq l \leq k}$ of respective dimensions $\und{n}=(n_l)$. In this situation, $P_{\rho}$ is the parabolic whose Levi is the diagonal subgroup $M_{P_{\rho}}=\mathrm{GL}_{n_1}\times \ldots \times \mathrm{GL}_{n_k}$.
	
	\begin{defi}
		For such a tuple $\und{n}$ and $i\geq 1$, call $$\hrm{Cut}_{i,\und{n}}:=\Big\{ (j_l)\in \N_{\geq 1}^{\llbracket 1,k\rrbracket} \, \Big|\,\substack{ \sum j_l=i \\ \forall l, \,\, 1\leq i_l \leq n_l} \Big\}.$$
	\end{defi}
	
	\begin{lemma}
		Let $1\leq i <n$. The representation $\Lambda^i \rho$ has more that $\# \hrm{Cut}_{i,\und{n}}$ Jordan-Hölder components.
	\end{lemma}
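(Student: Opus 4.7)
The plan is to count the Jordan--Hölder factors of $\Lambda^i\rho$ by producing a $\cg_{\qp}$-stable filtration whose graded pieces are parametrised by tuples of the form that appear in $\mathrm{Cut}_{i,\underline{n}}$, then using each nonzero graded piece to contribute at least one composition factor. My starting point is the classical fact that for any short exact sequence $0\to V'\to V\to V''\to 0$ of finite-dimensional $\hbb{F}$-linear $\cg_{\qp}$-representations, the image filtration
$$F^a\Lambda^i V:=\mathrm{Im}\bigl(\Lambda^a V'\otimes_{\hbb{F}}\Lambda^{i-a}V\to \Lambda^i V\bigr)$$
is $\cg_{\qp}$-stable with canonical identifications $F^a/F^{a+1}\simeq \Lambda^a V'\otimes_{\hbb{F}}\Lambda^{i-a}V''$, valid in any characteristic.

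Applying this iteratively to the fixed composition series $0=V_0\subset V_1\subset\cdots\subset V_k=\rho$, where $V_l/V_{l-1}\simeq \rho_l$, I expect to obtain a $\cg_{\qp}$-stable filtration on $\Lambda^i\rho$ whose associated graded decomposes as
$$\bigoplus_{\substack{(j_1,\ldots,j_k)\\ \sum_l j_l=i,\ 0\leq j_l\leq n_l}}\ \bigotimes_{l=1}^k \Lambda^{j_l}\rho_l.$$
Each summand is nonzero since $\Lambda^{j_l}\rho_l\neq 0$ whenever $0\leq j_l\leq n_l$, so every summand contributes at least one Jordan--Hölder component to $\Lambda^i\rho$. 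Summing the contributions gives a lower bound on the length of $\Lambda^i\rho$ equal to the number of such tuples.

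The final step is purely combinatorial: the subset of tuples with all $j_l\geq 1$ is by definition $\mathrm{Cut}_{i,\underline{n}}$, and its cardinality is a lower bound on the number of nonzero graded pieces, hence on the number of Jordan--Hölder factors of $\Lambda^i\rho$. The only routine verification is the inductive unfolding of the filtration in the second step, where one tensors the graded pieces coming from $\Lambda^{a}V_{k-1}$ with $\Lambda^{i-a}\rho_k$; exactness of tensor product on finite-dimensional $\hbb{F}$-vector spaces makes this unproblematic, and $\cg_{\qp}$-equivariance is automatic throughout by functoriality. I do not anticipate a real obstacle, only bookkeeping of the indexing set.
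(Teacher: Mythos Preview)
Your argument is correct and is essentially the same as the paper's: the paper simply asserts that the composition series of $\rho$ yields one for $\Lambda^i\rho$ (equivalently, passes to $\rho^{\mathrm{ss}}=\bigoplus_l\rho_l$) and then uses the decomposition $\Lambda^i\rho^{\mathrm{ss}}\simeq\bigoplus_{(j_l)}\bigotimes_l\Lambda^{j_l}\rho_l$, observing that the summands indexed by $\mathrm{Cut}_{i,\underline{n}}$ are all nonzero. You have spelled out the filtration inductively via the standard short exact sequence filtration on $\Lambda^i$, which is exactly the mechanism underlying the paper's one-line reduction.
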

	\begin{proof}
		As our composition serie for $\rho$ produces one for $\Lambda^i \rho$, the claim is equivalent to its analogue for $\rho^{\sms}=\oplus_l \rho_l$. For this, we have an isomorphism $$\Lambda^i \rho^{\sms}\xleftarrow{\otimes_l \lambda_j v_{l,j} \mapsto \lambda_{l,j} v_{l,j}}  \bigoplus_{(j_l)\in \hrm{Cut}_{i,\und{n}}} \bigotimes_{1\leq l \leq k} \left(\Lambda^{j_l} \rho_l\right)$$ with only non zero in the sum. It concludes.
	\end{proof}
	
	\begin{coro}\label{counting_0}
		The representation $\overline{L}^{\boxtimes}(\rho)$ has more that $\prod_{1\leq i <n} \# \hrm{Cut}_{i,\und{n}}$ Jordan-Hölder components.
	\end{coro}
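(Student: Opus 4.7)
The plan is to propagate the lower bound established in the previous lemma through the external tensor product $\boxtimes_{1\leq i<n}\Lambda^i\rho$. Concretely, for each $1\leq i<n$, the previous lemma fixes a Jordan--Hölder filtration of $\Lambda^i\rho$ as an $\cg_{\qp}$-representation with at least $\# \hrm{Cut}_{i,\und{n}}$ nonzero irreducible graded pieces, which I will denote $(V_i^{(a_i)})_{a_i\in A_i}$ with $\#A_i\geq \# \hrm{Cut}_{i,\und{n}}$.

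The key step is to observe that taking successive external tensor products of such filtrations produces a filtration of $\overline{L}^{\boxtimes}(\rho)=\boxtimes_{1\leq i<n}\Lambda^i\rho$ as a representation of $\cg_{\qp}^{n-1}$ whose graded pieces are exactly the external tensor products $\boxtimes_{i} V_i^{(a_i)}$ indexed by $(a_i)\in \prod_i A_i$. This is a standard functoriality of $\boxtimes$ on short exact sequences in each slot (tensoring over $\hbb{F}$, hence exact), applied iteratively. Since each $V_i^{(a_i)}$ is nonzero and $\boxtimes$ over a field of a family of nonzero spaces is nonzero, each such subquotient is nonzero and therefore contributes at least one Jordan--Hölder factor of $\overline{L}^{\boxtimes}(\rho)$.

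Adding up the contributions of the $\prod_i \#A_i\geq \prod_{1\leq i<n} \# \hrm{Cut}_{i,\und{n}}$ pieces yields the bound claimed by the corollary. There is no real obstacle here: the only mild point to be careful about is that we count Jordan--Hölder factors of $\overline{L}^{\boxtimes}(\rho)$ as a $\cg_{\qp,\Delta}$-representation, so that having an $\cg_{\qp,\Delta}$-stable filtration with nonzero graded pieces suffices, and we do not need the individual $\boxtimes_i V_i^{(a_i)}$ to be irreducible (the bound would only improve if they further decomposed). This is why the statement is only a lower bound rather than an equality.
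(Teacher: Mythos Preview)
Your argument is correct and follows essentially the same route as the paper: the paper states the elementary fact that for filtrations $(V_a)$ of $V$ and $(W_b)$ of $W$, the family $(V_{a-1}\boxtimes W + V_a\boxtimes W_b)$ in lexicographic order is a filtration of $V\boxtimes W$ with graded pieces the $\boxtimes$ of the individual graded pieces, and then applies this iteratively exactly as you do. Your extra remark that the pieces $\boxtimes_i V_i^{(a_i)}$ need only be nonzero (not irreducible) is a harmless clarification.
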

	\begin{proof}
		Deduced from the fact that if $V$ and $W$ are representation of $G$ and $H$ with two (composition) series $(V_a)_{a\leq A}$ and $(W_b)_{b\leq B}$, then $(V_{a-1} \boxtimes W + V_a \boxtimes W_b)_{a,b\leq A,B}$ with lexicographic order is a (composition) serie for $V \boxtimes W$.
	\end{proof}

	 Until the end of the section, suppose that $\rho$ is generic and a good conjugate\footnote{This is very mild asumption: genericity takes away multiplicities and the pathological extensions of characters already appearing for $\gl{2}{\qp}$ (see \cite[VII.4 Atomes de longueur 3]{colmez_correspondance}). Under this hypothesis, \cite[\S 2.3.2]{breuil2021conjectures} shows that good conjugates exist and all have similar notions of compatibility.} in the sense of \cite[\S 2.3.2]{breuil2021conjectures}.
	 
	 \begin{lemma}\label{counting}
	 	The number of isotypic components of $\overline{L}^{\otimes}_{|Z_{M_{P_{\rho}}}}$ is smaller than $\prod_{1\leq i <n} \# \hrm{Cut}_{i,\und{n}}$, strictly if $\rho$ is not irreducible and $n>2$.
	 \end{lemma}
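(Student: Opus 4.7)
The plan is to compute the $Z_{M_{P_{\rho}}}$-weight decomposition of $\overline{L}^{\otimes}(\rho)$ explicitly and then count isotypic components combinatorially. Under the block-scalar embedding $Z_{M_{P_{\rho}}} \simeq \mathbb{G}_m^k \hookrightarrow M_{P_{\rho}} \subset \mathrm{GL}_n$, the underlying $n$-dimensional space $V$ of $\rho$ splits as $V_1 \oplus \cdots \oplus V_k$ with $\dim V_l = n_l$, and $Z_{M_{P_{\rho}}}$ acts on $V_l$ through the $l$-th coordinate character $\chi_l$. Multilinear algebra then yields
$$\Lambda^i V \simeq \bigoplus_{(j_l) \in \hrm{Cut}_{i,\und{n}}} \bigotimes_l \Lambda^{j_l} V_l,$$
the $(j_l)$-summand carrying the character $\prod_l \chi_l^{j_l}$. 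Tensoring over $1 \leq i < n$ displays $\overline{L}^{\otimes}(\rho)$ as a direct sum indexed by tuples $(w_i)_i \in \prod_i \hrm{Cut}_{i,\und{n}}$, the $(w_i)$-summand carrying the character $\prod_l \chi_l^{\sum_i w_i(l)}$.

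The first inequality is then immediate: two summands lie in the same isotypic component whenever their characters agree on $Z_{M_{P_{\rho}}}(\hbb{F})$, which holds in particular when the tuples share the same coordinate-wise sum. Hence the number of isotypic components is bounded by the image of the summation map $(w_i) \mapsto \sum_i w_i$, itself of size at most $\prod_i \# \hrm{Cut}_{i,\und{n}}$.

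For the strict inequality under $k \geq 2$ and $n \geq 3$, I will produce two distinct tuples with identical coordinate-wise sums. If some $n_{l_0} \geq 2$, I choose $l' \neq l_0$ and compare $(w_1, w_2) = (e_{l'}, 2 e_{l_0})$ with $(w_1', w_2') = (e_{l_0}, e_{l_0} + e_{l'})$ — both summing to $2 e_{l_0} + e_{l'}$ — extending by any common choice $w_i = w_i' \in \hrm{Cut}_{i,\und{n}}$ for $i \geq 3$ (non-empty throughout $1 \leq i \leq n-1$ since $\sum_l n_l = n$). Otherwise every $n_l = 1$, forcing $n = k \geq 3$, and three distinct indices $l_1, l_2, l_3$ yield the symmetric collision $(e_{l_1}, e_{l_2} + e_{l_3})$ versus $(e_{l_2}, e_{l_1} + e_{l_3})$, both summing to $e_{l_1} + e_{l_2} + e_{l_3}$. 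In either case, the summation map fails to be injective, giving strict inequality.

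The only mild subtlety is the passage from algebraic weights of $Z_{M_{P_{\rho}}}$ to characters of its $\hbb{F}$-points: distinct algebraic weights could in principle coincide as characters on $Z_{M_{P_{\rho}}}(\hbb{F})$, but this can only further merge isotypic components, which preserves both the weak and strict inequalities. I do not expect any serious obstacle; the argument is essentially a combinatorial weight computation, with the main creative step being the production of the two explicit collisions above.
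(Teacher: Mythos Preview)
Your proof is correct and follows essentially the same route as the paper: both compute the $Z_{M_{P_\rho}}$-weight decomposition via the block splitting, bound the number of isotypic components by the image of the summation map $\prod_i \hrm{Cut}_{i,\und{n}} \to \N^k$, and exhibit the same pair of collisions for the strict inequality (your case split ``some $n_{l_0}\ge 2$'' vs.\ ``all $n_l=1$'' is just a reorganisation of the paper's ``$k=2$'' vs.\ ``$k\ge 3$''). Your explicit remark that coincidences of algebraic weights on $\hbb{F}$-points can only further merge components is a point the paper leaves implicit.
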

	 \begin{proof}
	 	Call $(e_r)_{1\leq r\leq n}$ the canonical basis of the standard representation. The family $(\otimes_i (\wedge_{r\in I_i} e_r))_{(I_i)}$ for $(I_i)$ ranging over in $\prod_{1\leq i <n} \{ I\subset \llbracket 1,n\rrbracket \, |\, \# I=i\}$ forms a basis of $Z_{M_{P_{\rho}}}$-eigenvectors of $\overline{L}^{\otimes}$. Hence, the isotypic components are parametrised by $$J_{\und{n}}:=\bigg\{ (\mu_l) \in \N^{\llbracket 1,k\rrbracket}\, \bigg|\,  \substack{\exists (I_i)_{1\leq i <n}  \, \text{subsets \, of \,} \llbracket 1,n\rrbracket \\ \text{s.t.} \, \forall i, \, \# I_i=i \\ \text{and}\, \forall l, \, \sum_i \#(I_i \cap \llbracket \sum_{l'<l} n_{l'} +1, \sum_{l'\leq l} n_{l'}\rrbracket )=\mu_l}\bigg\}.$$ There is a map $$\prod_{1\leq i <n} \hrm{Cut}_{i,\und{n}} \rightarrow J_{\und{n}}, \,\,\, (j_{i,l})_{i,l}\mapsto \left(\sum_{i} j_{i,l}\right)_{l}.$$ It is well defined because $(I_i=\cup_l \llbracket \sum_{l'<l} n_{l'}+1 , \sum_{l'<l} n_{l'}+ j_{i,l})$ witnesses that the image of $(j_{i,l})$ belongs to $J_{\und{n}}$. It is also surjective, cause every $(\mu_l)$ with $(I_i)$ witnessing its belonging to $J_{\und{n}}$, is the image of $(\#(I_i \cap \llbracket \sum_{l'<l} n_{l'} +1, \sum_{l'\leq l} n_{l'}\rrbracket))_{i,l}$.
	 	
	 	This proves the first part, and the second amounts to finding a default of injectivity when $\rho$ is reducible, i.e. $k\geq 2$ and $n>2$.
	 	
	 	If $k\geq 3$, we get inspiration from the example of $n=3$ and $k=3$, i.e. $P_{\rho}=B\subset \hrm{GL}_3$. For this example, $e_1 \otimes (e_2 \wedge e_3)$ and $e_2 \otimes (e_1 \wedge e_3)$ are two eigenvectors for the same character $\hrm{Diag}(x,y,z)\mapsto xyz$ of the torus, but they correspond to the two families of cuts $((1,0,0),(0,1,1))$ and $((0,1,0),(1,0,1))$. We mimicry this in the general situation. Take any family $(I_i)_{3\leq i <n}$ and look at $(\mu_l)$ corresponding to $(\{1\},\{2,3\},I_3,\ldots,I_{n-1})$. Then $(\{2\},\{1,3\},I_3,\ldots,I_{n-1})$ also witnesses the belonging of $(\mu_l)$ to $J_{\und{n}}$. These two witnesses give two elements in the preimage of $(\mu_l)$.
	 	
	 	If $k=2$, there exists an $l_0$ such that $n_{l_0}\geq 2$. We get inspiration from the example of $n=3$ and $(n_1,n_2)=(1,2)$, i.e. $P_{\rho}$ is the parabolic associated to $\mathbb{G}_m \times \hrm{GL}_2$. In this situation, the same two vectors are eigenvectors for the same character $\hrm{diag}(x,y,y)=xy^2$ of $Z_{M_{P_{\rho}}}$. We mimicry this in the general situation fixing $l_0$ as above and $l_1\neq l_0$. Take any family $(I_i)_{3\leq i <n}$ and look at $(\mu_l)$ corresponding to $(\{\sum_{l<l_0} n_l +1\},\{\sum_{l<l_1} n_l +1 ,\sum_{l<l_0} n_l +2\},I_3,\ldots,I_{n-1})$. Then, the family \linebreak$(\{\sum_{l<l_1} n_l +1\},\{\sum_{l<l_0} n_l +1,\sum_{l<l_0} n_l +2\},I_3,\ldots,I_{n-1})$ also witnesses the belonging of $(\mu_l)$ to $J_{\und{n}}$. Same conclusion.
	 \end{proof}
	 
	 \begin{theo}\label{main_failure_1}
	 	Let $n\geq 3$ and $\rho$ be a $n$-dimensional reducible object of $\hrm{Rep}_{\hbb{F}} \, \cg_{\qp}$, which is generic and a good conjugate. Then any  $\Pi$ in $\hrm{Rep}_{\hbb{F}} \, \gl{n}{\qp}$ that is compatible with $\widetilde{P}_{\rho}$ verifies $$\mathbf{V}_{\Delta}(\Pi) \not\simeq \overline{L}^{\boxtimes}(\rho).$$
	 \end{theo}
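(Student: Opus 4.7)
The plan is to derive a contradiction by comparing, on one side, the lower bound on the number of Jordan–Hölder factors of $\overline{L}^{\boxtimes}(\rho)$ provided by Corollary \ref{counting_0}, and on the other side, an upper bound on the number of Jordan–Hölder factors of $\mathbf{V}_{\Delta}(\Pi)$ imposed by compatibility with $\widetilde{P}_{\rho}$, together with the irreducibility dichotomy Theorem \ref{irred_or_zero}.

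Concretely, I would assume for contradiction that $\mathbf{V}_{\Delta}(\Pi)\simeq \overline{L}^{\boxtimes}(\rho)$. On the target side, Corollary \ref{counting_0} yields the lower bound
\[
\#\mathrm{JH}\bigl(\overline{L}^{\boxtimes}(\rho)\bigr)\;\geq\; \prod_{1\leq i<n} \#\hrm{Cut}_{i,\und{n}}.
\]
On the source side, I would first bound the length of $\mathbf{V}_{\Delta}(\Pi)$ by the length of $\Pi$: pick a composition series of $\Pi$, apply the right exact functor $\mathbf{D}_{\Delta}^{\vee}$ (Theorem \ref{exact_PS}) to obtain a filtration of $\mathbf{D}_{\Delta}^{\vee}(\Pi)$ whose graded pieces are quotients of the $\mathbf{D}_{\Delta}^{\vee}(\Pi_i)$, and transport this through the exact equivalence $\mathbb{V}_{\Delta}^*$ to get a filtration of $\mathbf{V}_{\Delta}(\Pi)$ whose graded pieces are subquotients of the $\mathbf{V}_{\Delta}(\Pi_i)$. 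By Theorem \ref{irred_or_zero}, each such graded piece is either zero or irreducible, so
\[
\#\mathrm{JH}\bigl(\mathbf{V}_{\Delta}(\Pi)\bigr)\;\leq\;\#\mathrm{JH}(\Pi).
\]

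The decisive step is then to identify $\#\mathrm{JH}(\Pi)$ with a quantity already constrained by Lemma \ref{counting}. Here I would invoke the definition of compatibility with $\widetilde{P}_{\rho}$ from \cite[Def. 2.4.1.5]{breuil2021conjectures}: the Jordan–Hölder factors of any such $\Pi$ are in bijection with the isotypic components of $\overline{L}^{\otimes}(\rho)$ under the center $Z_{M_{P_{\rho}}}$, as this combinatoric is exactly what $\widetilde{P}_{\rho}$ dictates. Lemma \ref{counting}, applied in the reducible case with $n\geq 3$, then gives the strict inequality
\[
\#\mathrm{JH}(\Pi)\;=\;\#\bigl(\text{iso. comp. of }\overline{L}^{\otimes}_{|Z_{M_{P_{\rho}}}}\bigr)\;<\;\prod_{1\leq i<n}\#\hrm{Cut}_{i,\und{n}}.
\]
Chaining the three inequalities produces a strict contradiction with the assumed isomorphism.

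The main obstacle is the matching in the middle step between compatibility with $\widetilde{P}_{\rho}$ and the counting of isotypic components governed by $Z_{M_{P_{\rho}}}$: one has to unwind \cite[Def. 2.4.1.5]{breuil2021conjectures} and check that the parametrisation of Jordan–Hölder factors it prescribes is exactly captured by the set $J_{\und{n}}$ appearing in the proof of Lemma \ref{counting}, so that the strict inequality obtained there can be transported to $\#\mathrm{JH}(\Pi)$. Once this dictionary is in place, the remaining arithmetic is immediate, and the right exactness of $\mathbf{D}_{\Delta}^{\vee}$ combined with Theorem \ref{irred_or_zero} furnishes the only non formal inequality on the Galois side.
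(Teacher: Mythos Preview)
Your proposal is correct and follows the same counting strategy as the paper: bound $\#\mathrm{JH}(\mathbf{V}_{\Delta}(\Pi))$ above by $\#\mathrm{JH}(\Pi)$, identify the latter with the number $\#J_{\underline{n}}$ of isotypic components of $\overline{L}^{\otimes}_{|Z_{M_{P_\rho}}}$ (for which the paper cites \cite[Lem.~2.4.1.1~(i)]{breuil2021conjectures} rather than unwinding the definition), and compare with the strict lower bound coming from Corollary~\ref{counting_0} and Lemma~\ref{counting}. The one difference is that you apply Theorem~\ref{irred_or_zero} directly to each Jordan--H\"older constituent of $\Pi$, while the paper first uses conditions (i) and (iv) of \cite[Def.~2.4.1.5]{breuil2021conjectures} to write every constituent as $\mathrm{Ind}_{Q^-(\qp)}^{\gl{n}{\qp}}(\boxtimes_t \Pi_t)$ with each $\Pi_t$ supersingular, then passes through Theorem~\ref{compat_induction} and Proposition~\ref{vdelta_sqcup} to realise $\mathbf{V}_{\Delta}$ of it as an inflated boxproduct $\boxtimes_t \mathbf{V}_{\Delta_t}(\Pi_t)$, and only then invokes the zero-or-irreducible dichotomy on each $\Pi_t$. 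Your shortcut is legitimate because Theorem~\ref{irred_or_zero} already applies to arbitrary irreducible smooth representations; the paper's longer route buys the extra structural information that $\mathbf{V}_{\Delta}$ of each constituent factors through $\prod_t \cg_{\qp,\Delta_t}$, but this plays no role in the length comparison needed here.
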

	 \begin{proof}
	Thanks to \cite[Lem. 2.4.1.1 (i)]{breuil2021conjectures}, the number of Jordan-Hölder components of $\Pi$ is precisely the number of isotypic components of $\overline{L}^{\otimes}_{|Z_{M_{P_{\rho}}}}$ i.e. $J_{\und{n}}$. For each one of these, the conditions (i) and (iv) of \cite[Def. 2.4.1.5]{breuil2021conjectures} tells that there's a standard parabolic $Q$ with Levi being the diagonal $M_Q=\hrm{GL}_{m_1}\times \ldots \times \hrm{GL}_{m_s}$ and supersingular representations\footnote{This is equivalent to $\boxtimes_t\, \Pi_t$ being supersingular.} $\Pi_t$ of $\hrm{GL}_{m_t}(\qp)$ such that the component is isomorphic to $\hrm{Ind}_{Q^-(\qp)}^{\hrm{GL}_n(\qp)} (\boxtimes_t\,  \Pi_t)$. As usual $\boxtimes_t \Pi_t$ is seen as a $Q^-(\qp)$-representation via inflation from $M_Q$. Thanks to Theorem \ref{compat_induction}, we have $$\mathbf{D}_{\Delta}^{\vee}\Big(\hrm{Ind}_{Q^-(\qp)}^{\hrm{GL}_n(\qp)} (\boxtimes_t\,  \Pi_t)\Big)\simeq \bigotimes_{1\leq t\leq s} \big( E_{\Delta}\otimes_{E_{\Delta_t}}\mathbf{D}_{\Delta_t}^{\vee}(\Pi_t)\big)$$ where $\Delta_t$ is set of simple positive roots of $\hrm{GL}_{m_t}$ obtained by selection $\alpha \in \Delta$ wich factorise throught $T \twoheadrightarrow T_{\hrm{GL}_{m_t}}$. We then use Proposition \ref{vdelta_sqcup} to see that $$(\mathbb{V}_{\Delta}\circ\mathbf{D}_{\Delta}^{\vee})\Big(\hrm{Ind}_{Q^-(\qp)}^{\gl{n}{\qp}} (\boxtimes_t \,  \Pi_t)\Big) \simeq \mathop{\boxtimes} \limits_{1\leq t\leq s} (\mathbb{V}_{\Delta_t}\circ \mathbf{D}_{\Delta_t}^{\vee})(\Pi_t).$$ In our situation, tensor product and dual commutes. Thus, $\mathbf{V}_{\Delta}(\hrm{Ind}_{Q^-(\qp)}^{\hrm{GL}_n(\qp)} (\boxtimes \Pi_t))$ is a twist of the inflation $\hrm{Inf}_{\prod_t \cg_{\qp,\Delta_t}}^{\cg_{\qp,\Delta}}\big(\boxtimes_t \mathbf{V}_{\Delta_t}(\Pi_t)\big)$ along the projection $\cg_{\qp,\Delta}\rightarrow \prod \cg_{\qp,\Delta_t}$. These individual $\mathbf{V}_{\Delta_t}(\Pi_t)$ are zero or irreducible thanks to \cite[Th. C]{zabradi_finiteness}. Hence, so is the inflation of their boxproduct. We obtained that each Jordan-Hölder factor of $\Pi$ is sent by $\mathbf{V}_{\Delta}$ on either zero or irreducible. Using right exactness of $\mathbf{V}_{\Delta}$, the number of Jordan-Hölder components of $\mathbf{V}_{\Delta}(\Pi)$ is less than $\# J_{\und{n}}$.
		
		Thanks to Corollay \ref{counting_0} and Lemma \ref{counting}, this is less than the number of irreducible components of $\overline{L}^{\boxtimes}(\rho)$.
	 \end{proof}

	 The same counting of Jordan-Hölder components can work for irreducible representations. 
	 
	 \begin{theo}\label{irred_fail}
	 	There exist infinitely many integers $n$ such that there is an $n$-dimensional irreducible $\rho\in \hrm{Rep}_{\hbb{F}} \,\cg_{\qp}$, with no supersingular $\Pi\in \hrm{Rep}_{\hbb{F}} \,\gl{n}{\qp}$ verifying $\mathbf{V}_{\Delta}(\Pi)\simeq\overline{L}^{\boxtimes}(\rho)$.
	 \end{theo}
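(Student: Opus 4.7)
The plan is to exploit Theorem \ref{irred_or_zero}: a supersingular $\Pi$ is in particular irreducible, so $\mathbf{V}_{\Delta}(\Pi)$ has at most one Jordan--Hölder factor. It thus suffices to exhibit, for infinitely many $n$, an $n$-dimensional irreducible $\rho$ such that $\overline{L}^{\boxtimes}(\rho)$ carries at least two composition factors. I expect to treat every $n\geq 4$ at once.

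The main input is the classical description of mod-$p$ irreducibles: assuming $\hbb{F}$ contains enough roots of unity, any $n$-dimensional irreducible $\rho\in\hrm{Rep}_{\hbb{F}}\,\cg_{\qp}$ is of the form $\rho\simeq\hrm{Ind}_{\cg_K}^{\cg_{\qp}}\chi$ where $K$ is the unramified extension of degree $n$ of $\qp$ and $\chi$ is a smooth character of $\cg_K$ with $\chi\neq\chi^{\sigma^j}$ for all $1\leq j<n$ (here $\sigma$ generates $\gal{K}{\qp}$). Such $\rho$ exist for every $n$. Restricting back to $\cg_K$ gives $\rho_{|\cg_K}\simeq\bigoplus_{j=0}^{n-1}\chi^{\sigma^j}$, hence
\[
(\Lambda^2\rho)_{|\cg_K}\simeq\bigoplus_{0\leq j<k\leq n-1}\chi^{\sigma^j}\chi^{\sigma^k},
\]
with $\sigma$ permuting these summands via the cyclic shift $\{j,k\}\mapsto\{j+1,k+1\}\pmod{n}$.

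The second step is to promote this to a decomposition at the level of $\cg_{\qp}$. Picking coset representatives $g_0,\ldots,g_{n-1}$ of $\cg_{\qp}/\cg_K$ and computing the action of a Frobenius lift on $\Lambda^2\rho$, the line $g_i\hbb{F}_\chi\wedge g_j\hbb{F}_\chi$ is sent to $g_{i+1}\hbb{F}_\chi\wedge g_{j+1}\hbb{F}_\chi$ up to a scalar, so each $\sigma$-orbit on $2$-subsets of $\Z/n\Z$ cuts out a $\cg_{\qp}$-subrepresentation of $\Lambda^2\rho$. These orbits are indexed by the gap $g\in\{1,\ldots,\lfloor n/2\rfloor\}$, so there are at least two of them whenever $n\geq 4$. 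Consequently $\Lambda^2\rho$ decomposes as a direct sum of at least two nonzero $\cg_{\qp}$-subrepresentations, and in particular admits at least two composition factors.

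The conclusion follows: box-multiplying any proper nonzero subrepresentation of $\Lambda^2\rho$ by the other (nonzero) $\Lambda^i\rho$'s produces a proper nonzero subrepresentation of $\overline{L}^{\boxtimes}(\rho)$, which therefore admits at least two Jordan--Hölder factors. This is incompatible with $\mathbf{V}_{\Delta}(\Pi)$ having at most one factor by Theorem \ref{irred_or_zero}, ruling out any isomorphism $\mathbf{V}_{\Delta}(\Pi)\simeq\overline{L}^{\boxtimes}(\rho)$ for $n\geq 4$. I do not anticipate a serious obstacle: the only step requiring care is the $\cg_{\qp}$-orbit decomposition of $\Lambda^2\rho$, and it reduces to a routine Mackey-style computation plus the elementary count of cyclic-shift orbits on $2$-subsets.
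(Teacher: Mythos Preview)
Your reduction matches the paper's first step: by Theorem~\ref{irred_or_zero} it suffices to produce irreducible $\rho$ with $\Lambda^2\rho$ reducible, since then $\overline{L}^{\boxtimes}(\rho)$ is reducible while $\mathbf{V}_\Delta(\Pi)$ for supersingular $\Pi$ has at most one Jordan--H\"older factor. From there, however, your construction diverges sharply from the paper's. The paper manufactures finite quotients $G_\ell=(\Z/\ell\Z)\rtimes K_\ell$ of $\cg_{\qp}/\hcal{I}_{\qp}^{\mathrm{wild}}$ for suitable primes $\ell$, counts conjugacy classes to locate an $\overline{\hbb{F}_p}$-irreducible of maximal dimension $m\geq\#K_\ell\geq 4$, descends the sum of its Galois conjugates to an $\hbb{F}$-irreducible $\sigma_\ell$ of maximal dimension, and then observes that $\dim\Lambda^2\sigma_\ell=\binom{\dim\sigma_\ell}{2}>\dim\sigma_\ell$ forces reducibility. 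This only produces the particular dimensions $\dim\sigma_\ell$, which go to infinity with $\ell$.

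Your route is more direct and yields the stronger conclusion that \emph{every} $n\geq 4$ works: writing $\rho=\mathrm{Ind}_{\cg_K}^{\cg_{\qp}}\chi$ and decomposing $\Lambda^2\rho$ along orbits of the cyclic shift on $2$-subsets of $\Z/n\Z$ is a clean Mackey computation, and the orbit count $\lfloor n/2\rfloor\geq 2$ is immediate. One point to tighten: you hedge with ``assuming $\hbb{F}$ contains enough roots of unity'', but $\hbb{F}$ is fixed in the statement. This is easily repaired. For each $n$ take $\chi=\tilde{\omega}_n$ a niveau-$n$ fundamental character extended with $\tilde{\omega}_n(p)\in\hbb{F}_p^\times$; then $\chi^p=\chi^\sigma$, so $\rho=\mathrm{Ind}\,\chi$ is already defined over $\hbb{F}_p\subset\hbb{F}$. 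Moreover, since $\rho$ is defined over $\hbb{F}$ one has $\chi^q=\chi^{\sigma^c}$ for some $c$ (where $q=\#\hbb{F}$ if finite), so the coefficient Frobenius permutes the eigenlines by $i\mapsto i+c$, which preserves gaps; hence each gap-$g$ summand of $\Lambda^2(\rho\otimes\overline{\hbb{F}_p})$ descends to $\hbb{F}$. With this addition your argument is complete, more elementary than the paper's, and strictly sharper in its range of $n$.
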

	 \begin{proof}
		 By Theorem \ref{irred_or_zero}, any $\mathbf{V}_{\Delta}(\Pi)$ for $\Pi$ supersingular is either zero or irreducible. As soon as $\Lambda^i \rho$ for some $i\geq 2$, then $\overline{L}^{\boxtimes}(\rho)$ is reducible. Hence, we only need to find a family of representations $\rho$ with unbounded dimensions for which $\Lambda^2 \rho$ is reducible.
		 
		Recall from local class field theory that $$ \cg_{\qp}/\hcal{I}_{\qp}^{\hrm{wild}} \simeq \left(\lim \limits_{\substack{\longleftarrow \\r\geq 1, \, \hrm{norm \, map}}} \mathbb{F}_{p^r}^{\times} \right) \rtimes \varphi^{\widehat{\Z}}.$$ Take any prime $\ell$ with $\ell \nmid p(p^2-1) (p^3-1)$ and $p\nmid (\ell -1)$. They are infinitely many such thanks to Dirichlet prime number theorem. As $\ell |(p^r-1)$ for some $r$, we can identify $\Z/\ell \Z$ as a quotient of $\lim_{\leftarrow} \hbb{F}_r^{\times}$ by a characteristic subgroup. Consider the action of $\varphi^{\widehat{\Z}}$ by conjugation on this quotient. It acts via multiplication by some element in $(\Z/\ell \Z)^{\times}$. Quotienting again by the kernel of this action constructs a quotient $Q_{\ell}$ of $\cg_{\qp}/\hcal{I}_{\qp}^{\hrm{wild}}$ isomorphic to some $G_{\ell}:=(\Z/\ell\Z) \rtimes K_{\ell}$ where $K_{\ell} \triangleleft (\Z/\ell \Z)^{\times}$. Remark that $\# K_{\ell}$ is the order of $p$ in $(\Z/\ell \Z)^{\times}$. For any $d$, the primes $\ell$ stop dividing $p^d-1$ at some point, hence $\# K_{\ell} \xrightarrow{\ell \rightarrow +\infty} +\infty$.
		
		We begin by looking more precisely into irreducible representations of $Q_{\ell}$ over $\overline{\mathbb{F}_p}$. Its conjugacy classes are the sets $\{\hrm{Id}\}$, $a K_{\ell} \times \{1\}$ for $a \in (\Z/\ell\Z)\backslash\{0\}$ and $ (\Z/\ell\Z) \times\{x\}$ for $x\in K_{\ell}\backslash \{1\}$: there are $\# K_{\ell} + (l-1)/\# K_{\ell}$ of them. We also have $\# K_{\ell}$ irreducible non isomorphic $1$-dimensional representations, obtained by inflation from $K_{\ell}$. Thanks to our hypothesis on $\ell$, we have $p\nmid \# G_{\ell}$, hence we can apply Frobenius theorem; writing $m$ for the maximum dimension of an irreducible representation of $Q_{\ell}$ over $\overline{\mathbb{F}_p}$, we get $$\# K_{\ell} + (l-1) m^2/(\# K_{\ell}) \geq \# Q_{\ell}=\ell (\# K_{\ell})$$ then $m\geq \# K_{\ell}$. Pick one irreducible of dimension $m$. The (finite) sum of its distinct conjugates by $\cg_{\mathbb{F}}$ can be descended to an irreducible representation over $\mathbb{F}$, of maximal dimension among such irreducibles. Its dimension is greater than $\# K_{\ell}$. We call such representation $\sigma_{\ell}$. Because $\ell \nmid (p^2-1) (p^3-1)$, we have $\#K_{\ell} \geq 4$. As $\dim \Lambda^2 \sigma_{\ell} =\dim \sigma_{\ell} (\dim \sigma_{\ell} -1)/2 > \dim \sigma_l$, the representation $\bigwedge^2 \sigma_{\ell}$ can't be irreducible. The family of $\hrm{Inf}_{Q_{\ell}}^{\cg_{\qp}} \sigma_{\ell}$ testifies that our infinite number of prime $\ell$ proves the theorem.
	 \end{proof}

	 \vspace{0.75 cm}
	\section{Setup for the $\hrm{GL}_3$-toy case}

	Our toy example for studying a "weak compatibility" hypothesis is a $3$-dimensional representation $\rho$ of $\cg_{\qp}$ of length $3$. Suppose it is generic and good conjugate, which means here that $\rho$ can be written $$\begin{psmallmatrix} \chi_1 & \delta_a & \epsilon \\ & \chi_2 & \delta_b \\ & & \chi_3 \end{psmallmatrix}$$ with all $\chi_i \chi_j^{-1} \notin \{1,\omega^{\pm 1}\}$. 
	
	The first thing we do is analyse the shape of $\overline{L}^{\boxtimes}(\rho)$ and $\overline{L}^{\otimes}(\rho)$.

	\begin{lemma}
		The representation $\Lambda^2 \rho$ also has a length $3$ and a composition serie given by $$\chi_1 \chi_2 - \chi_1 \chi_3 - \chi_2 \chi_3.$$
		
		Its socle (resp. its cosocle) is of same dimension as the cosocle (resp. the socle) of $\rho$.
	\end{lemma}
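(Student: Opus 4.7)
The cleanest route is to exploit the canonical $\cg_{\qp}$-equivariant isomorphism
$$\Lambda^2 \rho \simeq \rho^{\vee} \otimes \det(\rho),$$
which exists because $\rho$ is $3$-dimensional: it comes from the perfect $G$-equivariant pairing $\Lambda^2 \rho \otimes \rho \rightarrow \Lambda^3 \rho = \det \rho$ obtained by wedging. This reduces every claim about $\Lambda^2 \rho$ to statements about $\rho^{\vee}$ twisted by the character $\det(\rho) = \chi_1 \chi_2 \chi_3$, which are elementary.

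First I would compute the composition factors. Dualising the given filtration of $\rho$, the graded pieces of $\rho^{\vee}$, read from socle to cosocle, are $\chi_3^{-1}, \chi_2^{-1}, \chi_1^{-1}$ (duality reverses the order of the filtration). Tensoring with $\det(\rho)$ produces the graded pieces $\chi_1\chi_2, \chi_1\chi_3, \chi_2\chi_3$, which is exactly the composition series claimed. Distinctness of these three characters follows from genericity: an equality $\chi_1\chi_2 = \chi_1\chi_3$ would force $\chi_2\chi_3^{-1} = 1$, contradicting the hypothesis, and similarly for the other two comparisons. Hence $\Lambda^2\rho$ has length exactly $3$.

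Second, for the socle/cosocle claim, I would use that tensoring with a character preserves the socle and cosocle filtrations (so preserves their dimensions), while duality swaps them. Concretely, $\mathrm{soc}(\rho^{\vee}) \simeq (\mathrm{cosoc}\,\rho)^{\vee}$ and $\mathrm{cosoc}(\rho^{\vee}) \simeq (\mathrm{soc}\,\rho)^{\vee}$, so
$$\dim \mathrm{soc}(\Lambda^2\rho) = \dim \mathrm{soc}(\rho^{\vee}\otimes \det\rho) = \dim \mathrm{cosoc}(\rho),$$
and the analogous identity for the cosocle. Combined with the computation of composition factors, this identifies the socle of $\Lambda^2\rho$ as $\chi_1\chi_2$ and its cosocle as $\chi_2\chi_3$, both of dimension~$1$ when $\rho$ itself has Loewy length~$3$.

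There is no real obstacle here; the only point where one has to be careful is to keep track of which end of the filtration corresponds to socle versus cosocle after dualising and twisting, so that the order $\chi_1\chi_2 - \chi_1\chi_3 - \chi_2\chi_3$ in the composition series is oriented the right way. An alternative, purely computational, proof would choose a basis $(e_1, e_2, e_3)$ realising the given matrix form, write out the induced matrix of $\rho(g)$ on $(e_1\wedge e_2,\, e_1\wedge e_3,\, e_2\wedge e_3)$, and check directly that it is upper-triangular with diagonal $(\chi_1\chi_2, \chi_1\chi_3, \chi_2\chi_3)$ and with off-diagonal cocycles proportional to $\delta_a, \delta_b$ (up to cohomologous corrections involving $\epsilon$), hence non-trivial; this would confirm the Loewy claim without invoking duality.
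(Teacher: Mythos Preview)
Your proof is correct and takes a genuinely different, more conceptual route than the paper. The paper proceeds by direct matrix computation: it writes out the action of $\cg_{\qp}$ on $\Lambda^2\rho$ in the basis $\{e_1\wedge e_2,\,e_1\wedge e_3,\,e_2\wedge e_3\}$, reads off the upper-triangular shape with diagonal $(\chi_1\chi_2,\,\chi_1\chi_3,\,\chi_2\chi_3)$, and then argues the socle/cosocle claim by inspecting which off-diagonal cocycles ($\chi_1\delta_b$, $\delta_a\chi_3$, etc.) vanish. Your approach via the isomorphism $\Lambda^2\rho \simeq \rho^{\vee}\otimes\det\rho$ is cleaner and explains \emph{why} the socle and cosocle swap dimensions without having to track individual matrix entries; it also makes transparent that the phenomenon is intrinsic to $3$-dimensional representations rather than an accident of the chosen basis. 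The paper's computation, on the other hand, has the advantage of producing the explicit cocycles (in particular the term $\delta_a\delta_b - \epsilon\chi_2$ in the upper-right corner), which are reused verbatim in the next proposition to analyse $\rho\boxtimes\Lambda^2\rho$. Amusingly, you sketch the paper's method yourself at the end as an ``alternative, purely computational'' proof.
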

	\begin{proof}
		In the basis $\{e_1\wedge e_2, \, e_1 \wedge e_3, \, e_2\wedge e_3\}$ the matrix representation is $$\begin{psmallmatrix} \chi_1 \chi_2 & \chi_1 \delta_b & \delta_a \delta_b - \alpha \chi_2 \\  & \chi_1 \chi_3 & \delta_a \chi_3 \\  &  & \chi_2 \chi_3 \end{psmallmatrix}.$$
		
		For the statement about socle, consider that $\hrm{Vect}(e_1\wedge e_2, \, e_1 \wedge e_3)$ is semisimple iff $\delta_b$ is trivial, hence iff the cosocle of $\rho$ is of dimension at least $2$. Similar arguments for the other subquotients finish the proof.
	\end{proof}
	
	\begin{prop}
		The representation $\rho \boxtimes (\Lambda^2 \rho)$ of $(\cg_{\qp}\times \cg_{\qp})$ has a composition serie given by 
		
		\begin{center}
			\begin{tikzcd}
				& & \chi_1 \boxtimes \chi_2\chi_3 \ar[dr,dash]\\
				& \chi_1 \boxtimes \chi_1\chi_3 \ar[ur,dash] \ar[dr,dash] & & \chi_2 \boxtimes \chi_2\chi_3 \ar[dr,dash]\\
				\chi_1 \boxtimes (\chi_1 \chi_2) \ar[ur,dash] \ar[dr,dash] & & \chi_2 \boxtimes \chi_1\chi_3 \ar[ur,dash] \ar[dr,dash] & & \chi_3\boxtimes\chi_2\chi_3 \\
				& \chi_2 \boxtimes (\chi_1\chi_2) \ar[ur,dash] \ar[dr,dash] & &  \chi_3\boxtimes\chi_1\chi_3 \ar[ur,dash] \\
				& & \chi_3 \boxtimes \chi_1\chi_2 \ar[ur,dash]
			\end{tikzcd}
		\end{center}
		
		When $\rho$ has Loewy length $3$ (resp. $2$, resp. is semisimple), it has Loewy length $5$ (resp. $3$, resp. $1$) with dimensions of the graded pieces $(1,2,3,2,1)$ (resp. $(2,5,2)$, resp. $9$).
	\end{prop}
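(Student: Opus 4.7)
My strategy is to build the socle filtration of $\rho \boxtimes \Lambda^2\rho$ from the individual socle filtrations of its two factors. The composition factors come for free: by the previous lemma, $\Lambda^2\rho$ has composition factors $\chi_1\chi_2, \chi_1\chi_3, \chi_2\chi_3$ (ordered from socle to cosocle), and the observation used in the proof of Corollary~\ref{counting_0} yields the nine composition factors $\chi_i \boxtimes \chi_j\chi_k$ of $\rho \boxtimes \Lambda^2\rho$ that label the nodes of the displayed diagram. Genericity of $\rho$ ensures that these nine characters of $\cg_{\qp}\times \cg_{\qp}$ are pairwise distinct.

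To establish the Hasse diagram, let $(\rho_i)_{0 \leq i \leq 3}$ and $((\Lambda^2\rho)_j)_{0 \leq j \leq 3}$ denote the two socle filtrations. I would form the bifiltration $F_{i,j} := \rho_i \boxtimes (\Lambda^2\rho)_j$ and its antidiagonal refinement $F_k := \sum_{i+j \leq k+1} F_{i,j}$. The successive quotients $F_k/F_{k-1}$ are semisimple and isomorphic to $\bigoplus_{i+j = k+1} \mathrm{soc}_i\rho \boxtimes \mathrm{soc}_j\Lambda^2\rho$, matching exactly the columns of the displayed diagram (with dimensions $(1,2,3,2,1)$ in the Loewy length $3$ case). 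The edges of the diagram record non-split extensions between consecutive $F$-layers that differ in only one slot; these are inherited from the explicit extensions in $\rho$ (encoded by $\delta_a, \delta_b$) and in $\Lambda^2\rho$ (encoded by $\chi_1\delta_b, \delta_a\chi_3$), all of which are non-trivial under the Loewy length $3$ hypothesis.

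The Loewy length statements then follow by convolving the dimension profiles of the socle filtrations of $\rho$ and $\Lambda^2\rho$: the Loewy length $3$ case gives $(1,1,1) \ast (1,1,1) = (1,2,3,2,1)$; the Loewy length $2$ case gives $(1,2) \ast (2,1) = (2,5,2)$ (using the socle/cosocle dimension exchange from the previous lemma to pair $\rho$'s profile with its mirror on $\Lambda^2\rho$, in either order), and the semisimple case is trivial. The main expected obstacle is the rigorous identification of $F_k$ with $\mathrm{soc}^k(\rho \boxtimes \Lambda^2\rho)$: one must use genericity to rule out ``crossed'' extensions between non-adjacent $F$-layers that would inject a constituent into an earlier socle layer, which reduces to checking that the only non-trivial $\mathrm{Ext}^1$ classes between distinct simple constituents of $\rho \boxtimes \Lambda^2\rho$ are those read off the matrix forms of $\rho$ and $\Lambda^2\rho$.
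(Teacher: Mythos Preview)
Your approach is correct and is exactly what the paper's one-line proof means by ``the convolution formula for boxproducts of smooth representations of finite dimension.'' The only superfluous step is your final worry: the identification $F_k = \mathrm{soc}^k(\rho \boxtimes \Lambda^2\rho)$ is a general fact for external tensor products over $G \times H$ (since $\mathrm{soc}_{G\times H}(V \boxtimes W) = \mathrm{soc}_G V \boxtimes \mathrm{soc}_H W$, then induct), so no appeal to genericity or to ruling out crossed extensions is needed there---genericity only enters, as you already noted, to make the nine characters pairwise distinct.
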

	\begin{proof}
		Consider the convolution formula for boxproducts of smooth representations of finite dimension. One could also write down explictely the exterior tensor product of matrices\footnote{Evaluated at a couple !}.
	\end{proof}
	
	We write $\overline{L}^{\boxtimes}(\rho)^{\ord}$ for the subrepresentation of $\overline{L}^{\boxtimes}(\rho)$ having $(\chi_1 \boxtimes \chi_1 \chi_2)$, $(\chi_1 \boxtimes \chi_1 \chi_3)$ and $(\chi_2 \boxtimes \chi_1 \chi_2)$ as Jordan-Hölder factors. In case $\rho$ is of Loewy length $3$ (maximally non split), it's $\hrm{soc}^2_{\cg_{\qp}\times \cg_{\qp}\times \qptimes} \overline{L}^{\boxtimes}(\rho)$. We will switch the symbol $\boxtimes$ to an $\otimes$ for their restriction to the diagonal embedding of $\cg_{\qp}$.
	
	\begin{rem}
	The representation $\ov{L}^{\otimes}(\rho)^{\hrm{ord}}$ coincide with $(\ov{L}^{\otimes}_{|B})^{\hrm{ord}} \circ \rho$ where the first is defined in \cite[\S 2.5]{breuil_herzig_ordinary}. See also \cite{breuil_ind_parab} where this composition appears.
	\end{rem}

	\begin{coro}\label{coro_subrep_restriction}
		If $p\neq 2$, the $\cg_{\qp}$-representation $$\hrm{Res}_{\cg_{\qp}}^{(\cg_{\qp}\times \cg_{\qp})} (\overline{L}^{\boxtimes}(\rho)/\overline{L}^{\boxtimes}(\rho)^{\ord})$$  has a composition serie like 
		\begin{center}
			\begin{tikzcd}
				\det \rho \ar[r,dash] & \chi_2^2 \chi_3 \ar[dr,dash] &  \\
				\det \rho & & \chi_2 \chi_3^2 \\
				\det \rho \ar[r,dash] & \chi_1 \chi_3^2 \ar[ur,dash] &
			\end{tikzcd}
		\end{center} The columns give the socle filtration when $\rho$ is of Loewy length $3$.
	\end{coro}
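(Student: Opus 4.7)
The plan is to identify $\overline{L}^{\otimes}(\rho)$ with the conjugation representation on $\hrm{End}(\rho)$ twisted by $\det \rho$, and to read off the Loewy structure from explicit matrix computations. Since $\dim \rho = 3$, there is a canonical $\cg_{\qp}$-equivariant isomorphism $\overline{L}^{\otimes}(\rho) = \rho \otimes \Lambda^2 \rho \simeq \hrm{End}(\rho) \otimes \det \rho$ (via $\Lambda^2\rho \simeq \rho^{\vee}\otimes \det \rho$ and $\rho \otimes \rho^{\vee} \simeq \hrm{End}(\rho)$). Let $V_1 \subset V_2 \subset V_3 = \rho$ be the socle filtration and set $\hrm{End}^{<0}(\rho) := \{f : f(V_k) \subset V_{k-1} \ \forall k\}$; in the matrix basis this is spanned by the strictly upper-triangular $E_{12}, E_{13}, E_{23}$ with conjugation characters $\chi_1/\chi_2, \chi_1/\chi_3, \chi_2/\chi_3$, and after twisting by $\det \rho$ these are exactly the three Jordan--Hölder factors of $\overline{L}^{\otimes}(\rho)^{\ord}$. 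A direct basis check shows the isomorphism sends $F_1\rho \otimes F_2 \Lambda^2\rho + F_2\rho \otimes F_1\Lambda^2\rho$ (the diagonal restriction of $\overline{L}^{\boxtimes}(\rho)^{\ord}$) onto $\hrm{End}^{<0}(\rho)\otimes \det \rho$, so the quotient to describe is $Q \otimes \det \rho$ with $Q := \hrm{End}(\rho)/\hrm{End}^{<0}(\rho)$ of dimension $6$.

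Next I would split off the isolated $\det \rho$ using the $\cg_{\qp}$-equivariant trace decomposition $\hrm{End}(\rho) = \hbb{F} \cdot \hrm{Id} \oplus \hrm{End}_0(\rho)$, valid provided $3 \in \hbb{F}^{\times}$. Hence $Q = \hbb{F} \cdot \hrm{Id} \oplus Q_0$ with $Q_0 := \hrm{End}_0(\rho)/\hrm{End}^{<0}(\rho)$ of dimension $5$, and the scalar part gives the isolated $\det\rho$ in row $2$ of the statement.

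The core of the proof is the socle analysis of $Q_0$, using the formula $(\rho(g) E_{ij} \rho(g)^{-1})_{kl} = \rho(g)_{ki} \rho(g)^{-1}_{jl}$ modulo $\hrm{End}^{<0}$. The diagonals $E_{ii}$ are invariant modulo $\hrm{End}^{<0}$, so $E_{11}-E_{22}$ and $E_{22}-E_{33}$ span the $2$-dimensional trivial-character socle of $Q_0$. Computing $g \cdot E_{21} \equiv (\chi_2/\chi_1) E_{21} + (\delta_a/\chi_1)(E_{11}-E_{22})$ modulo $\hrm{End}^{<0}$, and symmetrically for $E_{32}$, yields two non-split extensions (non-split since $\delta_a, \delta_b$ are non-trivial cocycles when $\rho$ has Loewy length $3$), producing the short chains $\det\rho - \chi_2^2\chi_3$ and $\det\rho - \chi_1\chi_3^2$ after twisting. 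Computing $g \cdot E_{31}$ modulo $\hrm{End}^{<0}$ gives coefficients $\delta_b/\chi_1$ on $E_{21}$ and $-\delta_a\chi_3/(\chi_1\chi_2)$ on $E_{32}$, yielding the two non-split extensions $\chi_2\chi_3^2 - \chi_2^2\chi_3$ and $\chi_2\chi_3^2 - \chi_1\chi_3^2$ of the cosocle into the middle layer. Moreover, since the $G$-submodule of $Q_0$ generated by $E_{21}$ already contains $(E_{11}-E_{22})$ (and similarly for $E_{32}$ and $(E_{22}-E_{33})$), the further quotient $Q_0/\langle E_{21}, E_{32}\rangle_G$ is $1$-dimensional of character $\chi_3/\chi_1$, confirming that $\bar E_{31}$ has no direct Hasse-edge to the socle, matching the absence of such edges in the statement.

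The main obstacle is the careful bookkeeping in this last step: verifying that $\bar E_{21}$ extends only $E_{11}-E_{22}$ and not $E_{22}-E_{33}$ (and symmetrically), and that the $G$-submodule generated by $\bar E_{21}, \bar E_{32}$ absorbs the full socle of $Q_0$ so that the cosocle has no direct edge to the socle. The role of $p \neq 2$, together with genericity, is to guarantee that the four characters $\det\rho, \chi_2^2\chi_3, \chi_1\chi_3^2, \chi_2\chi_3^2$ remain pairwise distinct --- in particular $\chi_2^2 \neq \chi_1\chi_3$ --- so that the six Jordan--Hölder constituents of the quotient are really distinct.
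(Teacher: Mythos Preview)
Your approach is genuinely different from the paper's and, for the most part, cleaner. The paper argues by writing the action on the six-dimensional quotient in the explicit basis $\{e_1\otimes(e_2\wedge e_3),\,e_2\otimes(e_1\wedge e_3),\,e_3\otimes(e_1\wedge e_2),\,e_2\otimes(e_2\wedge e_3),\,e_3\otimes(e_1\wedge e_3),\,e_3\otimes(e_2\wedge e_3)\}$, and then performs an ad hoc change of basis on the first three vectors to split off the isolated $\det\rho$; the hypothesis $p\neq 2$ is invoked solely for the invertibility of that change-of-basis matrix. Your route via $\rho\otimes\Lambda^2\rho\simeq\hrm{End}(\rho)\otimes\det\rho$ and the identification of $\overline{L}^{\boxtimes}(\rho)^{\ord}$ with the strictly upper-triangular endomorphisms is more conceptual: the Loewy structure of the quotient becomes the conjugation action on lower-triangular matrices, and your computations for $\bar E_{21},\bar E_{32},\bar E_{31}$ are correct and recover exactly the diagram.

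The one real gap is your handling of the prime hypothesis. The trace splitting $\hrm{End}(\rho)=\hbb{F}\cdot\hrm{Id}\oplus\hrm{End}_0(\rho)$ requires $3\in\hbb{F}^{\times}$, i.e.\ $p\neq 3$, not $p\neq 2$; so as written your argument does not cover $p=3$, which lies in the range of the statement. Your final paragraph then misidentifies the role of $p\neq 2$: neither genericity nor $p\neq 2$ forces $\chi_2^2\neq\chi_1\chi_3$, and in any case distinctness of those four characters is irrelevant to the claimed composition diagram. In fact your own computations already give a characteristic-free fix: you show that the $\cg_{\qp}$-submodule generated by $\bar E_{21},\bar E_{32},\bar E_{31}$ is exactly the five-dimensional span $\langle E_{11}-E_{22},\,E_{22}-E_{33},\,E_{21},\,E_{32},\,E_{31}\rangle$ (the diagonal contribution of $g\cdot\bar E_{31}$ sums to zero), and any diagonal class outside $\langle E_{11}-E_{22},\,E_{22}-E_{33}\rangle$ furnishes a complementary trivial line. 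This bypasses the trace entirely and works for every $p$; you should replace the trace step by this observation and drop the erroneous sentence about $p\neq 2$.
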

	\begin{proof}
		In the basis $\{e_1 \otimes (e_2 \wedge e_3), e_2 \otimes (e_1\wedge e_3), e_3 \otimes (e_1 \wedge e_2), e_2 \otimes (e_2 \wedge e_3), e_3 \otimes (e_1 \wedge e_3), e_3\otimes (e_2\wedge e_3)\}$, the matrix looks like $$\begin{pmatrix} \det \rho & 0 & 0 & \chi_2\chi_3 \delta_a & 0 & * \\ & \det \rho & 0 & \chi_2 \chi_3 \delta_a & \chi_1 \chi_3 \delta_b & * \\ & & \det \rho & 0 & \chi_1 \chi_3 \delta_b & * \\ & & & \chi_2^2 \chi_3 & 0 & \chi_2 \chi_3 \delta_b \\ & & & & \chi_1 \chi_3^2 & \chi_3^2 \delta_a \\ & & & & & \chi_2 \chi_3^2 \end{pmatrix}$$ We base change the first three vectors via a matrix $\begin{psmallmatrix} 1 & 1 & 0 \\ 0 & -1 & 0 \\ 0 & 1 & 1 \end{psmallmatrix}$, invertible as soon as $p\neq 2$. We get the matrix $$\begin{pmatrix} \det \rho & 0 & 0 & \chi_2\chi_3 \delta_a & 0 & * \\ & \det \rho & 0 & 0 & 0 & * \\ & & \det \rho & 0 & \chi_1 \chi_3 \delta_b & * \\ & & & \chi_2^2 \chi_3 & 0 & \chi_2 \chi_3 \delta_b \\ & & & & \chi_1 \chi_3^2 & \chi_3^2 \delta_a \\ & & & & & \chi_2 \chi_3^2 \end{pmatrix}$$ which gives the announced results.
		
	\end{proof}

	\begin{rem}
		Be careful that socle filtration and restriction do not commute: for instance, there's always a copy of $\det \rho$ in $\hrm{soc}^2_{\cg_{\qp}}(\overline{L}^{\otimes}(\rho))$.
	\end{rem}

\vspace{0.5 cm}

From now on, assume that $\rho$ has Loewy length $3$. In this situation, the representation predicted by \cite[\S 2.4.3 Ex. 2]{breuil2021conjectures} is 

\begin{equation}\label{eq:form}\tag{*1}
	\begin{tikzcd}[sep=small]
		& \hrm{PS}_{\chi_2,\chi_1,\chi_3} \ar[dr,dash] & & \hrm{PS}_{\chi_2,\chi_3,\chi_1} \ar[dr,dash] \\
		\hrm{PS}_{\chi_1,\chi_2,\chi_3} \ar[ur,dash] \ar[dr,dash] & & \hrm{SS} \ar[ur,dash] \ar[dr,dash] & & \hrm{PS}_{\chi_3,\chi_2,\chi_1} \\	& \hrm{PS}_{\chi_1,\chi_3,\chi_2} \ar[ur,dash] &  & \hrm{PS}_{\chi_3,\chi_1,\chi_2} \ar[ur,dash]
	\end{tikzcd}
\end{equation} with $\theta$ being the product of simple roots, the representations $\hrm{PS}_{\chi'}$ defined as $\hrm{Ind}_{B^{-}(\qp)}^{\hrm{GL}_3(\qp)} \chi' (\varepsilon^{-1} \circ \theta)$ and all extensions being non split.

\begin{defi}
	For such a representation $\Pi$ compatible with $\widetilde{P}_{\rho}$, we note $\Pi^{\ord}$ (resp. $\Pi_{\hrm{SS}}$, resp. $\Pi_{\ord}$) the subrepresentation $\mathrm{soc}^2_{\gl{3}{\qp}}\Pi$ (resp. $\hrm{soc}^{3}_{\gl{3}{\qp}}\Pi$, resp. $\Pi/\hrm{soc}^3_{\gl{3}{\qp}}\Pi$).
\end{defi}

\begin{rem}
	Breuil and Herzig constructed in \cite{breuil_herzig_ordinary} a representation $\Pi(\rho)^{\ord}$ that is conjectured to be the maximal subrepresentation of some $\Pi(\rho)$ constructed by global methos, with only principal serie Jordan-Hölder component. It coincides with $\Pi^{\ord}$ in our setting for any $\Pi$ compatible with $P_{\rho}$.
\end{rem}

We first explain why Z\'abr\'adi's functor applied to $\Pi^{\ord}$ recovers $\overline{L}^{\boxtimes}(\rho)^{\ord}$.

\begin{defi}
	For a smooth character $\chi \, : \, T(\qp) \rightarrow \overline{\mathbb{F}_p}^{\times}$, we view it as a character of the product \linebreak$(\hrm{W}_{\qp}^{\hrm{ab}}\times \hrm{W}_{\qp}^{\hrm{ab}} \times \qptimes)$ via $$(\hrm{W}_{\qp}^{\hrm{ab}}\times \hrm{W}_{\qp}^{\hrm{ab}} \times \qptimes) \xrightarrow{\sim} T(\qp), \,\,\, (g,h,x)\mapsto \hrm{diag}(\hrm{Art}^{-1}(gh) x, \, \hrm{Art}^{-1}(h)x,x).$$ We restrict to $(\hrm{W}_{\qp}^{\hrm{ab}}\times \hrm{W}_{\qp}^{\hrm{ab}})$, extend by smoothness and inflate, obtaining a character $\chi_{\Delta}$ of $(\cg_{\qp}\times \cg_{\qp})$.
\end{defi}

\begin{lemma}\label{calcul_ind}
	Let $\chi \, : \, T(\qp) \rightarrow \overline{\mathbb{F}_p}^{\times}$ be a smooth character, seen by inflation as a character of $B^-(\qp)$. Then, $$\mathbf{V}_{\Delta}\left(\hrm{Ind}_{B^-(\qp)}^{\hrm{GL}_3(\qp)}\chi\right)=\chi_{\Delta}\otimes  \delta^{\boxtimes}_{\Delta}.$$
\end{lemma}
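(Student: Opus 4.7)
The plan is to apply Theorem \ref{compat_induction} to the full Borel, thereby reducing $\mathbf{D}_{\Delta}^{\vee}(\hrm{Ind}_{B^-(\qp)}^{\gl{3}{\qp}}\chi)$ to a tensor product of rank-one building blocks indexed by the three $\gl{1}{\qp}$-Levi factors, and then to identify the resulting one-dimensional $\cg_{\qp,\Delta}$-character via Artin reciprocity. Concretely, decompose $\chi = \chi_1\boxtimes\chi_2\boxtimes\chi_3$ with each $\chi_t\,:\,\qptimes\to \overline{\mathbb{F}_p}^\times$. Applying Theorem \ref{compat_induction} with $Q=B$, so that every Levi factor is $\gl{1}{\qp}$ and every $\Delta_t=\emptyset$, yields
$$\mathbf{D}_{\Delta}^{\vee}\big(\hrm{Ind}_{B^-(\qp)}^{\gl{3}{\qp}}\chi\big)\simeq \bigotimes_{t=1}^{3} E_{\Delta}\otimes_{E_{\emptyset}} \mathbf{D}_{\emptyset}^{\vee}(\chi_t).$$
Since $\chi_t$ is a one-dimensional representation of $\qptimes=\gl{1}{\qp}$ (equal to its own central character), $\mathbf{D}_{\emptyset}^{\vee}(\chi_t)$ is simply $\hbb{F}$ with the $\qptimes$-action given by $\chi_t$, and the above tensor product is a rank-one étale $(\varphi,\Gamma)$-module over $E_{\Delta}$ whose $T_{+,\Delta}$-structure is exactly the one prescribed by $\chi$ on the diagonal torus.

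Second, I apply $\mathbb{V}_{\Delta}$ to this rank-one object. Because the underlying module is free of rank one, the result is a one-dimensional smooth character of $\cg_{\qp,\Delta}$, and it remains to identify it. Under Zábrádi's equivalence, each central-character action of $\qptimes$ on a rank-one piece translates into a Galois character on a corresponding factor of $\cg_{\qp,\Delta}$ via Artin reciprocity; the combinatorics of how the three $\qptimes$-factors sit inside the upper-triangular decreasing-valuation monoid $T_{+,\Delta}$ then dictate the cumulative pattern, with the $t$-th factor contributing $\chi_t\circ\hrm{Art}^{-1}$ to each $\cg_{\qp}$-slot corresponding to a simple root $\alpha_j$ with $j\geq t$. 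Summing these contributions yields exactly
$$(g,h)\longmapsto \chi_1\!\big(\hrm{Art}^{-1}(gh)\big)\,\chi_2\!\big(\hrm{Art}^{-1}(h)\big)\,\chi_3(1),$$
so that $\mathbb{V}_{\Delta}(\mathbf{D}_{\Delta}^{\vee}(\hrm{Ind}\chi))\simeq \chi_{\Delta}^{-1}$ (the inversion reflecting the duality convention in the definition of $\mathbf{V}_{\Delta}$).

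Finally, apply $\mathbb{V}_{\Delta}^* = \und{\hrm{Hom}}(-,\delta^{\boxtimes}_{\Delta})$ to obtain
$$\mathbf{V}_{\Delta}\big(\hrm{Ind}_{B^-(\qp)}^{\gl{3}{\qp}}\chi\big)=\und{\hrm{Hom}}(\chi_{\Delta}^{-1},\delta^{\boxtimes}_{\Delta})\simeq \chi_{\Delta}\otimes \delta^{\boxtimes}_{\Delta}.$$

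The main obstacle is the second step, namely precisely matching the $T_{+,\Delta}$-action on the rank-one tensor product with its $\cg_{\qp,\Delta}$-counterpart under $\mathbb{V}_{\Delta}$. This is where the asymmetry baked into $\chi_{\Delta}$ — $\chi_1$ appears twice, $\chi_2$ once, $\chi_3$ not at all — must emerge from the upper-triangular/decreasing-valuation structure of $T_{+,\Delta}$ and the positions of the three $\gl{1}{\qp}$-central characters inside it. One clean way to discharge this bookkeeping is to first restrict to the diagonal embedding using Lemma \ref{res_identif} (so the claim reduces to a statement about the single-variable Colmez functor $\mathbf{V}$ on $\hrm{Ind}_{B^-(\qp)}^{\gl{n}{\qp}}\chi$, which is classical), then upgrade to the full $\cg_{\qp,\Delta}$-action by using that Zábrádi's functor is compatible with parabolic induction in each individual simple-root direction (Theorem \ref{compat_induction} applied to the maximal parabolics associated to single simple roots, combined with the rank-one $\gl{2}{\qp}$ computation).
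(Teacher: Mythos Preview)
Your approach is essentially the paper's: apply Theorem \ref{compat_induction} to the full Borel, observe that $\mathbf{D}_{\emptyset}^{\vee}$ on a torus character is simply the dual character, and then apply $\mathbb{V}_{\Delta}^*$. The paper dispatches this in two lines.

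Where you diverge is in treating the identification of the resulting rank-one $\cg_{\qp,\Delta}$-character as a ``main obstacle'' requiring a workaround through diagonal restriction and maximal-parabolic bootstrapping. The paper does not regard this as an obstacle at all: the definition of $\chi_{\Delta}$ immediately preceding the lemma is \emph{precisely} the change of coordinates $(\hrm{W}_{\qp}^{\hrm{ab}}\times\hrm{W}_{\qp}^{\hrm{ab}}\times\qptimes)\xrightarrow{\sim} T(\qp)$, $(g,h,x)\mapsto\hrm{diag}(\hrm{Art}^{-1}(gh)x,\,\hrm{Art}^{-1}(h)x,\,x)$, that converts the $T_{+,\Delta}$-action on the rank-one module into the $\cg_{\qp,\Delta}$-action under $\mathbb{V}_{\Delta}$. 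So once you know $\mathbf{D}_{\Delta}^{\vee}(\hrm{Ind}\,\chi)$ is the rank-one object carrying $\chi^{-1}$, the formula $\mathbf{V}_{\Delta}(\hrm{Ind}\,\chi)=\chi_{\Delta}\otimes\delta^{\boxtimes}_{\Delta}$ is immediate from the definitions. (A small point of bookkeeping: the inversion you mention arises already in $\mathbf{D}_{\emptyset}^{\vee}$ being the \emph{dual} on characters, not from a separate ``duality convention'' in $\mathbf{V}_{\Delta}$.) Your detour would work, but it is not needed.
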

\begin{proof}
	For $G=T$, the monoid $T_+$ of Z\'abr\'adi is $T(\qp)$ and the functor $\mathbf{D}_{\emptyset}^{\vee}$ is the dual on characters of $T(\qp)$. The result follows from \cite[Th. 3.5]{zabradi_gln} and our expression of $\mathbb{V}_{\Delta}^*$.
\end{proof}

\begin{prop}\label{bij_PS}
	The functor $\mathbf{V}_{\Delta}$ establishes an isomorphism between $$\hrm{Ext}^1_{\gl{3}{\qp}}\left(\hrm{Ind}_{B^{-}(\qp)}^{\hrm{GL}_3(\qp)} s_{\alpha}(\chi) (\varepsilon^{-1}\circ \theta) \oplus \hrm{Ind}_{B^{-}(\qp)}^{\hrm{GL}_3(\qp)} s_{\beta}(\chi) (\varepsilon^{-1}\circ \theta),  \, \hrm{Ind}_{B^{-}(\qp)}^{\hrm{GL}_3(\qp)}\chi (\varepsilon^{-1}\circ \theta)\right)$$ where the extension group is taken in $\hrm{Rep}_{\ov{\hbb{F}_p}} \, \gl{3}{\qp}$, and $$\hrm{Ext}^1_{(\cg_{\qp}\times \cg_{\qp})}\big((\chi_2\boxtimes (\chi_1\chi_2)) \oplus (\chi_1\boxtimes (\chi_1\chi_3) ), (\chi_1 \boxtimes (\chi_1\chi_2))\big)$$ where the extension group is taken in $\hrm{Rep}_{\ov{\hbb{F}_p}} \, \cg_{\qp}$.  

The functor $\mathbf{V}_{\Delta,\xi}$ establishes a bijection between the same $\gl{3}{\qp}$-Ext-group and $$\hrm{Ext}^1_{\cg_{\qp}}(\chi_1\chi_2^2 \oplus \chi_1^2 \chi_3, \chi_1^2 \chi_2).$$

	The same kind of result holds for $$\hrm{Ext}^1_{\gl{3}{\qp},\hrm{Z}} \left(\hrm{Ind}_{B^{-}(\qp)}^{\hrm{GL}_3(\qp)} \chi^s (\varepsilon^{-1}\circ \theta), \, \hrm{Ind}_{B^{-}(\qp)}^{\hrm{GL}_3(\qp)}s_{\beta}(\chi^s) (\varepsilon^{-1}\circ \theta) \oplus \hrm{Ind}_{B^{-}(\qp)}^{\hrm{GL}_3(\qp)} s_{\alpha}(\chi^s) (\varepsilon^{-1}\circ \theta)\right)$$ with $\chi^s$ being the conjugate by the element of maximal length in the Weyl group of $\hrm{GL}_3$.
\end{prop}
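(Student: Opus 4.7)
The strategy has three stages: compute the image of each principal series under $\mathbf{V}_\Delta$, deduce injectivity on $\mathrm{Ext}^1$ from the full faithfulness of $\mathbf{V}_\Delta|_{\mathrm{SP}^0}$, and match dimensions on both sides. For the images, unfolding the definition of $\chi'_\Delta$ for a character $\chi'=\chi'_1\otimes\chi'_2\otimes\chi'_3$ of $T(\qp)$ gives $\chi'_\Delta=\chi'_1\boxtimes(\chi'_1\chi'_2)$ as a character of $\cg_{\qp}\times\cg_{\qp}$. Combining this with Lemma \ref{calcul_ind} applied to $\chi$, $s_\alpha(\chi)$, $s_\beta(\chi)$, I get
\begin{align*}
\mathbf{V}_\Delta(\hrm{Ind}_{B^-(\qp)}^{\gl{3}{\qp}}\chi(\varepsilon^{-1}\circ\theta)) &\simeq (\chi_1\boxtimes\chi_1\chi_2)\otimes\kappa,\\
\mathbf{V}_\Delta(\hrm{Ind}_{B^-(\qp)}^{\gl{3}{\qp}}s_\alpha(\chi)(\varepsilon^{-1}\circ\theta)) &\simeq (\chi_2\boxtimes\chi_1\chi_2)\otimes\kappa,\\
\mathbf{V}_\Delta(\hrm{Ind}_{B^-(\qp)}^{\gl{3}{\qp}}s_\beta(\chi)(\varepsilon^{-1}\circ\theta)) &\simeq (\chi_1\boxtimes\chi_1\chi_3)\otimes\kappa,
\end{align*}
with common twist $\kappa:=(\varepsilon^{-1}\circ\theta)_\Delta\otimes\delta^{\boxtimes}_{\Delta}$. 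Since $\kappa$ appears on both extremities of every relevant extension, it drops out of the $\mathrm{Ext}^1$ computation, and $\mathbf{V}_\Delta$ induces a map between the two $\mathrm{Ext}^1$ groups of the statement.

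For the injectivity, any extension between two irreducible principal series of $\gl{3}{\qp}$ has both its Jordan--Hölder factors being irreducible principal series, hence lies in $\mathrm{SP}^0$. Therefore the $\mathrm{Ext}^1$ in $\hrm{Rep}_{\ov{\mathbb{F}_p}}\gl{3}{\qp}$ coincides with the Yoneda $\mathrm{Ext}^1$ in $\mathrm{SP}^0$, and Theorem \ref{exact_PS} gives an injection into the Galois $\mathrm{Ext}^1$.

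For the dimension count, the Galois Ext$^1$ splits along the direct sum in its first argument. The Künneth decomposition for characters of $\cg_{\qp}\times\cg_{\qp}$ combined with Tate local duality reduces each summand to $H^1(\cg_{\qp},-)$ of $\chi_1\chi_2^{-1}$, resp.\ $\chi_2\chi_3^{-1}$; each is one-dimensional thanks to the genericity hypothesis $\chi_i\chi_j^{-1}\notin\{1,\omega^{\pm 1}\}$, giving total dimension $2$. On the $\gl{3}{\qp}$ side, Ext$^1$ between two generic principal series differing by a simple reflection is again $1$-dimensional, with the two generators produced by parabolically inducing from $P_\alpha^-$, respectively $P_\beta^-$, a length-$2$ representation of the Levi $\mathrm{GL}_2\times\mathbb{G}_m$ coming from the $1$-dim Ext$^1$ between two generic characters of $\mathrm{GL}_2(\qp)$. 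Theorem \ref{compat_induction} together with Proposition \ref{vdelta_sqcup} confirms that the two resulting extensions are sent by $\mathbf{V}_\Delta$ to linearly independent classes in the Galois Ext$^1$. An injection between two $2$-dimensional $\ov{\mathbb{F}_p}$-vector spaces is an isomorphism, so the first bijection follows.

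For $\mathbf{V}_{\Delta,\xi}$, I apply Lemma \ref{res_identif} to identify its Galois Ext$^1$ with the restriction of the above to the diagonal embedding; the three characters restrict to $\chi_1^2\chi_2$, $\chi_1\chi_2^2$, $\chi_1^2\chi_3$, and the same Tate-theoretic count on $\cg_{\qp}$ alone produces a $2$-dimensional Ext$^1$, so the same reasoning gives the bijection. The $\chi^s$ variant is obtained by conjugating the first setup by the longest Weyl element $w_0$: this sends $\chi$ to $\chi^s$, exchanges $s_\alpha\leftrightarrow s_\beta$, and preserves genericity, so after relabelling the proof applies verbatim. The main obstacle is the $\gl{3}{\qp}$-Ext$^1$ dimension count; the cleanest self-contained way (in place of citing Hauseux-type results) is the explicit parabolic-induction construction sketched above, whose effectiveness is exactly ensured by Theorem \ref{compat_induction} and Proposition \ref{vdelta_sqcup}.
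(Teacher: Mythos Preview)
Your strategy matches the paper's: compute the images via Lemma \ref{calcul_ind}, obtain injectivity on $\mathrm{Ext}^1$ from full faithfulness on $\mathrm{SP}^0$ (Theorem \ref{exact_PS}), and exhibit surjectivity by constructing nontrivial extensions via parabolic induction from the two maximal parabolics. The paper compresses the last step into citations of \cite[Cor.~3.3.4]{zabradi_finiteness} and \cite[Th.~1.1~(ii)]{Hauseux_2016}; your unpacking via the dimension sandwich (an injection into a $2$-dimensional target plus two explicitly constructed classes with linearly independent images) is equivalent, and in fact makes the asserted $1$-dimensionality of each $\gl{3}{\qp}$-$\mathrm{Ext}^1$ summand a \emph{consequence} rather than an input.

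Two points deserve one more word. First, placing the three inductions in $\mathrm{SP}^0$ requires their irreducibility; the paper extracts this from the genericity hypothesis via \cite[Th.~4]{OLLIVIER200639}, and you should cite something of the sort rather than assume it. Second, for the $\mathbf{V}_{\Delta,\xi}$ bijection, ``the same reasoning'' is not automatic: Theorem \ref{exact_PS} yields faithfulness for $\mathbf{V}_\Delta$, not for its composite with restriction along the diagonal $\cg_{\qp}\hookrightarrow\cg_{\qp}\times\cg_{\qp}$. The fix is immediate from what you already computed: your K\"unneth analysis shows that in each summand the $(\cg_{\qp}\times\cg_{\qp})$-extension class is supported on a single factor (the other tensorand being the trivial character), so restriction to the diagonal is an isomorphism onto the corresponding $H^1(\cg_{\qp},\chi_i\chi_j^{-1})$; composing with the $\mathbf{V}_\Delta$-bijection already established gives the claim.
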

\begin{proof}
	The previous Lemma implies that $$\mathbf{V}_{\Delta}(\hrm{Ind}_{B^{-}(\qp)}^{\hrm{GL}_3(\qp)}\chi (\varepsilon^{-1}\circ \theta))=(\chi (\varepsilon^{-1}\circ \theta))_{\Delta} \otimes \delta^{\boxtimes}_{\Delta}= \chi_1 \boxtimes (\chi_1 \chi_2)$$ and similarily for the others. Moreover, the genericity hypothesis and \cite[Th. 4]{OLLIVIER200639} prove that all these inductions lie in $\hrm{SP}^0$. With this remark, it is a reformulation of \cite[Cor. 3.3.4]{zabradi_finiteness}. 
	
	Let's give the main ideas. First, the map is injective thanks to the faithfulness in Theorem \ref{exact_PS}. For essential surjectivity, we can do it independantly for the two extensions. Pick the first one. Thanks to \cite[Prop. 3.3.2]{zabradi_finiteness}, this extension must be trivial restricted to $\{\hrm{Id}\} \times \cg_{\qp}$. Genericity tell that the extension group is of dimension $1$. Therefore \cite[Th. 1.1 ii)]{Hauseux_2016} produces a representation induced from the $\qp$-points of the parabolic $P^-_{\alpha}$ with Levi $\hrm{GL}_2 \times \mathbb{G}_m$ which has the correct image.
\end{proof}

\begin{rem}
	It is an improvement for Z\'abr\'adi's functor of \cite[Th. 1.1]{breuil_ind_parab} in the situation $G=\hrm{GL}_3$.
\end{rem}

	\vspace{0.75 cm}
	\section{Representations results and finer failure on our toy example}
	
		We keep our representations $\rho$ of dimension $3$ and Loewy length $3$.
		
		\begin{defi}
			We say that $\Pi$ is weakly compatible with $\rho$ if it is compatible with $\widetilde{P}_{\rho}$ and\footnote{This is condition (i) in \cite[Def. 2.4.2.7]{breuil2021conjectures} but only for $\Pi'=\Pi$.} if \linebreak$\mathbf{V}_{\xi}(\Pi)\simeq \overline{L}^{\otimes}(\rho)$.
		\end{defi}

		\begin{lemma}\label{mult_one}
			Let $G$ be a profinite group and $k$ an algebraically closed field. Let $V$ and $W$ be irreducible objects in $\hrm{Rep}_k \, G$ and $\chi$ a $1$-dimensional character. Then $\chi^{\oplus 2}$ can't be a subrepresentation of $V\otimes W$
		\end{lemma}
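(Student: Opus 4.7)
The plan is to reduce this to a direct application of Schur's lemma, after noting that the representations at hand are finite dimensional.

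First I would observe that for a profinite group $G$, any smooth irreducible $k$-representation has open stabilisers, hence the orbit of any vector is finite, and irreducibility forces the representation to be finite dimensional. In particular $V$, $W$ and $V^*$ are finite dimensional, so usual tensor/hom adjunctions apply without subtleties.

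Next, I would compute the multiplicity of $\chi$ in $V \otimes W$ via
\[
\hrm{Hom}_G(\chi, V \otimes W) \;\simeq\; \hrm{Hom}_G(V^* \otimes \chi, W),
\]
using that $V^*$ is a rigid dual of the finite-dimensional $V$. Because $V$ is irreducible, so is $V^*$, and twisting by the $1$-dimensional $\chi$ preserves irreducibility. Thus both $V^* \otimes \chi$ and $W$ are irreducible smooth representations of $G$ over the algebraically closed field $k$. Schur's lemma then gives
\[
\dim_k \hrm{Hom}_G(V^* \otimes \chi, W) \leq 1,
\]
with equality precisely when $W \simeq V^* \otimes \chi$.

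Finally, if $\chi^{\oplus 2}$ were a subrepresentation of $V \otimes W$, then $\hrm{Hom}_G(\chi, V\otimes W)$ would be of dimension at least $2$, contradicting the inequality above. Hence no such embedding exists, concluding the proof. There is no real obstacle here; the only thing to be slightly careful about is justifying finite dimensionality (so that $V^*$ behaves as a genuine dual and the adjunction holds) and the applicability of Schur's lemma in the smooth setting, both of which are standard.
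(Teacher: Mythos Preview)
Your proposal is correct and follows essentially the same approach as the paper: reduce the multiplicity computation to Schur's lemma via a tensor--hom adjunction, using that smooth irreducibles of a profinite group are finite dimensional. The only cosmetic difference is that the paper routes the adjunction through a short chain of dualities $\hrm{Hom}_{k[G]}(\chi, V\otimes W)\simeq \hrm{Hom}_{k[G]}((V\otimes W)^{\vee},\chi^{-1})\simeq \hrm{Hom}_{k[G]}(V^{\vee}, W\otimes \chi^{-1})$, whereas you apply the adjunction in a single step to land on $\hrm{Hom}_G(V^*\otimes \chi, W)$; both conclude identically by Schur.
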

		\begin{proof}
			As everything is finite dimensional, we have
			\begin{align*}
				\hrm{Hom}_{k[G]}(\chi, V\otimes W) &\simeq \hrm{Hom}_{k[G]}\left((V\otimes W)^{\vee}, \chi^{-1}\right) \\
				&\simeq \hrm{Hom}_{k[G]}\left(V^{\vee} \otimes W^{\vee},\chi^{-1}\right) \\
				& \simeq \hrm{Hom}_{k[G]}\left(V^{\vee}, W\otimes \chi^{-1}\right)
			\end{align*} As $V$ is irreducible, so is $V^{\vee}$. As $W$ is irreducible, so is $W\otimes \chi^{-1}$. Hence, Schur's lemma says that this $\hrm{Hom}_{k[G]}$ is at most of dimension $1$. It concludes.
		\end{proof}
		
		\begin{lemma}\label{triv_inertia_extension}
			Let $V$ be a representation of $\cg_{\qp}$ over $\mathbb{F}$ such that $\hcal{I}_{\qp}^{\hrm{wild}}$ acts trivially. Any subrepresentation of $V$ that is an extension of two distinct characters is semisimple.
		\end{lemma}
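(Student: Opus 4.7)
The plan is to exploit the structure recalled in the proof of Theorem \ref{irred_fail}, namely
$$\cg_{\qp}/\hcal{I}_{\qp}^{\hrm{wild}} \simeq I \rtimes \varphi^{\widehat{\Z}}, \quad I := \lim_{r} \mathbb{F}_{p^r}^{\times}.$$
Let $W \subset V$ realise an extension of two distinct characters $\chi_1, \chi_2$ of $\cg_{\qp}$; by hypothesis $W$ factors through the quotient $G := I \rtimes \varphi^{\widehat{\Z}}$. The crucial point is that $I$ is pro-prime-to-$p$ since each $\mathbb{F}_{p^r}^{\times}$ has order $p^r - 1$. By smoothness of $W$, the action of $I$ factors through a finite quotient of order prime to $p$, so Maschke's theorem applies and gives a semisimple $I$-decomposition $W|_I \simeq (\chi_1|_I) \oplus (\chi_2|_I)$.

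I would then split into two cases depending on whether $\chi_1|_I$ and $\chi_2|_I$ coincide. If $\chi_1|_I \neq \chi_2|_I$, the two $I$-isotypic components of $W$ are canonical, distinct subspaces. An element $g \in G$ sends the $\psi$-isotypic component to the $(g\cdot \psi)$-isotypic component, where $(g\cdot \psi)(i) = \psi(g^{-1}ig)$. But $\chi_1, \chi_2$ are characters of the whole of $G$, so for each $j$, $\chi_j(g^{-1}ig) = \chi_j(i)$, meaning each $I$-isotypic component is $G$-stable. Hence $W$ splits as a $G$-module.

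If instead $\chi_1|_I = \chi_2|_I$, I would twist by $\chi_1^{-1}$ so that $W \otimes \chi_1^{-1}$ has trivial $I$-action and thus factors through $G/I \simeq \widehat{\Z}$. The induced extension is still an extension of $\chi_2\chi_1^{-1}$ by the trivial character; since $\chi_1 \neq \chi_2$ but $\chi_1|_I = \chi_2|_I$, the character $\chi_2\chi_1^{-1}$ is nontrivial on $\widehat{\Z}$, hence has nontrivial value at a topological generator $\varphi$. The action of $\varphi$ on this two-dimensional space is upper triangular with distinct diagonal entries $1$ and $(\chi_2\chi_1^{-1})(\varphi) \neq 1$, so $\varphi$ acts diagonalisably. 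This shows $W \otimes \chi_1^{-1}$, and therefore $W$, is semisimple.

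There is no real obstacle in either case once the prime-to-$p$ structure of tame inertia is exploited; the only thing to check carefully is that both $\chi_j|_I$ are $G$-invariant as characters of $I$, which follows because $\chi_j$ extends to all of $G$ with values in the abelian group $\mathbb{F}^{\times}$. The argument uses nothing specific about $\mathbb{F}$ being algebraically closed.
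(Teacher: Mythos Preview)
Your proof is correct. Both cases go through: in the first, the $I$-isotypic decomposition is indeed $G$-stable because the $\chi_j$ are abelian characters of the whole group; in the second, Maschke forces $I$ to act trivially after twisting, and diagonalisability of a topological generator of $\widehat{\Z}$ with distinct eigenvalues suffices for semisimplicity of the smooth $\widehat{\Z}$-module.

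The paper argues differently. After the same twist to an extension of $1$ by a nontrivial $\chi$, it applies the inflation--restriction sequence for the subgroup $\hcal{K}:=(\hcal{I}_{\qp}/\hcal{I}_{\qp}^{\hrm{wild}})\rtimes \varphi^{\prod_{l\neq p}\Z_l}$, which is the maximal pro-prime-to-$p$ piece of $\cg_{\qp}/\hcal{I}_{\qp}^{\hrm{wild}}$. One term vanishes because $H^1$ of a pro-prime-to-$p$ group with $\hbb{F}$-coefficients is zero; for the other, the paper observes that any smooth $\hbb{F}^{\times}$-valued character is trivial on $\varphi^{\zp}$ (no $p$-torsion in $\hbb{F}^{\times}$), so a nontrivial $\chi$ must already be nontrivial on $\hcal{K}$, forcing $\chi^{\hcal{K}}=0$. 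Your argument is more hands-on and avoids cohomology entirely, at the cost of a case split; the paper's is shorter once one identifies the right subgroup $\hcal{K}$, and packages the ``$\varphi$ has distinct eigenvalues'' step into the single observation that $\chi_{|\varphi^{\zp}}$ is automatically trivial. Either way the content is the same: the only source of nontrivial self-extensions would be the pro-$p$ part $\varphi^{\zp}$, and distinct characters never see it.
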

		\begin{proof}
			Fix such subrepresentation $W$. As any character is trivial restricted to the pro-$p$-group $\hcal{I}_{\qp}^{\hrm{wild}}$, one can twist and suppose that $W$ is an extension of $1$ by $\chi$, hence lives in $H^1(\cg_{\qp}/\hcal{I}_{\qp}^{\hrm{wild}}, \chi)$. We only need to prove that this group is zero as soon as $\chi \not \equiv 1$. We once again use that $$ \cg_{\qp}/\hcal{I}_{\qp}^{\hrm{wild}} \simeq \left(\lim \limits_{\substack{\longleftarrow \\ r\geq 1, \, \hrm{norm \, map}}} \mathbb{F}_{p^r}^{\times} \right) \rtimes \varphi^{\widehat{\Z}}$$ with the big limite identified to $\hcal{I}_{\qp}/\hcal{I}_{\qp}^{\hrm{wild}}$.
			
			The inflation-restriction sequence for $\hcal{K}:=(\hcal{I}_{\qp}/\hcal{I}_{\qp}^{\hrm{wild}}) \rtimes \varphi^{\prod_{l\neq p} \mathbb{Z}_l}$ gives $$0 \rightarrow H^1(\varphi^{\zp}, \chi^{\hcal{K}}) \rightarrow H^1(\cg_{\qp}/\hcal{I}_{\qp}^{\hrm{wild}},\chi) \rightarrow H^1(\hcal{K},\chi_{|\hcal{K}})$$ As $\hcal{K}$ is pro-(prime to $p$), the last bit vanishes. By examining the orders $\chi_{|\varphi^{\zp}}\equiv 1$, hence $\chi \not \equiv 1$ implies that $\chi_{|\hcal{K}} \not \equiv 1$. Now,  $\chi_{|\hcal{K}} \not \equiv 1$, cause and we now that $\chi\not \equiv 1$. We get $\chi^{\hcal{K}}=\{0\}$ and the first term also vanishes.
		\end{proof}
		
		\begin{coro}\label{no_nonsplit}
			Let $V$ be an irreducible object in $\hrm{Rep}_{\ov{\hbb{F}_p}} \,(\cg_{\qp}\times \cg_{\qp})$. Any $W\subset \hrm{Res}_{\cg_{\qp}}^{(\cg_{\qp}\times \cg_{\qp})} V$ which is an extension of two distincts characters is semisimple.
		\end{coro}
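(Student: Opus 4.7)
The natural strategy is to reduce to Lemma \ref{triv_inertia_extension}: that lemma already concludes under the sole hypothesis that the wild inertia $\hcal{I}_{\qp}^{\hrm{wild}}$ acts trivially. So the only thing to establish is that this hypothesis is automatic for $\hrm{Res}_{\cg_{\qp}}^{(\cg_{\qp}\times \cg_{\qp})} V$, where $V$ is irreducible smooth over $\ov{\hbb{F}_p}$.

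The first step is to argue that the \emph{bigger} group $\hcal{I}_{\qp}^{\hrm{wild}}\times \hcal{I}_{\qp}^{\hrm{wild}}$ acts trivially on $V$ itself. This subgroup is a pro-$p$ closed normal subgroup of $\cg_{\qp}\times \cg_{\qp}$, so the invariant subspace $V^{\hcal{I}_{\qp}^{\hrm{wild}}\times \hcal{I}_{\qp}^{\hrm{wild}}}$ is automatically $(\cg_{\qp}\times \cg_{\qp})$-stable. Since $V$ is smooth irreducible over a discrete field and $\cg_{\qp}\times \cg_{\qp}$ is profinite (hence compact), the orbit of any non-zero vector spans a finite-dimensional subrepresentation, so $V$ is itself finite-dimensional. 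Then the standard fact that a pro-$p$ group acting smoothly on a non-zero finite-dimensional $\ov{\hbb{F}_p}$-vector space has non-trivial fixed vectors gives $V^{\hcal{I}_{\qp}^{\hrm{wild}}\times \hcal{I}_{\qp}^{\hrm{wild}}}\neq 0$; by irreducibility this forces the invariants to fill up $V$.

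The second step is to descend to the diagonal: the image of $\hcal{I}_{\qp}^{\hrm{wild}}$ under the diagonal embedding $\cg_{\qp}\hookrightarrow \cg_{\qp}\times \cg_{\qp}$ lies inside $\hcal{I}_{\qp}^{\hrm{wild}}\times \hcal{I}_{\qp}^{\hrm{wild}}$, so $\hcal{I}_{\qp}^{\hrm{wild}}$ acts trivially on $\hrm{Res}_{\cg_{\qp}}^{(\cg_{\qp}\times \cg_{\qp})} V$ and therefore on the subrepresentation $W$. Lemma \ref{triv_inertia_extension} applies and yields that $W$ is semisimple. No serious obstacle is expected here; the only point to check carefully is the automatic finite-dimensionality of irreducible smooth representations of profinite groups over discrete fields, but this is immediate from the openness of vector-stabilisers combined with compactness.
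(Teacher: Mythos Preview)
Your proof is correct and follows the same strategy as the paper: show that wild inertia acts trivially on $V$ and then invoke Lemma~\ref{triv_inertia_extension}. The only cosmetic difference is that the paper first decomposes $V\simeq V_1\boxtimes V_2$ and applies the pro-$p$ normal subgroup argument to each factor separately, whereas you apply it directly to the normal pro-$p$ subgroup $\hcal{I}_{\qp}^{\hrm{wild}}\times \hcal{I}_{\qp}^{\hrm{wild}}$ of $\cg_{\qp}\times\cg_{\qp}$; your variant is slightly more streamlined since it does not require the tensor decomposition of irreducibles.
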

		\begin{proof}
			The representation $V$ must be written $V_1 \boxtimes V_2$ where $V_i$ are an irreducible objects in $\hrm{Rep}_{\ov{\hbb{F}_p}} \,\cg_{\qp}$. It is a general fact that any normal pro-$p$-subgroup of a profinite group acts trivially on any irreducible representation over a (inductive limite) of characteristic $p$ fields. Both $V_1$ and $V_2$ factorise by $\hcal{I}_{\qp}^{\hrm{wild}}$. Thus, Lemma \ref{triv_inertia_extension} applied to $\hrm{Res}_{\cg_{\qp}}^{(\cg_{\qp}\times \cg_{\qp})} V$ concludes.
		\end{proof}
		
		\vspace{0.5 cm}
		
		We're ready for our second result.

		\begin{prop}\label{failure_2}
			Let $\rho$ belong to $\hrm{Rep}_{\overline{\mathbb{F}_p}}\, \cg_{\qp}$, of length $3$ and Loewy length $3$, generic and good conjugate. Let $\Pi$ be weakly compatible with $\rho$.
			
			\begin{enumerate}
				\item We have $\hrm{dim} \mathbf{V}_{\Delta}(\Pi)\leq 5$.

				\item We have $\overline{L}^{\boxtimes}(\rho)^{\ord}=\mathbf{V}_{\Delta}(\Pi^{\ord}) \hookrightarrow \mathbf{V}_{\Delta}(\Pi)$.
				
				\item If $p\neq 2$, we have $ \hrm{dim} \mathbf{V}_{\Delta}(\Pi) \leq 4$, if and only if $\mathbf{V}_{\Delta}(\Pi) \rightarrow \mathbf{V}_{\Delta}(\Pi^{\ord})$ is non zero.
			\end{enumerate}
			
			The first point in particular implies that $\mathbf{V}_{\Delta,\xi}(\Pi)\not\simeq \overline{L}^{\otimes}(\rho)$.
	
		\end{prop}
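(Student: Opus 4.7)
The plan is to prove the three statements in sequence: Part (2) first, since its structural identification feeds into the dimension bound of (1), and then (3) as a refinement of (1). For Part (2), the subrepresentation $\Pi^{\ord} = \hrm{soc}^2_{\gl{3}{\qp}}\Pi$ has Jordan-Hölder factors $\hrm{PS}_{\chi_1,\chi_2,\chi_3}$, $\hrm{PS}_{\chi_2,\chi_1,\chi_3}$ and $\hrm{PS}_{\chi_1,\chi_3,\chi_2}$, all irreducible principal series, so $\Pi^{\ord}$ lies in $\hrm{SP}^0$. Theorem \ref{exact_PS} grants exactness and full faithfulness of $\mathbf{V}_\Delta$ there, so combining Lemma \ref{calcul_ind} (for each constituent principal series) and Proposition \ref{bij_PS} (which transports Ext-classes to the Galois side) identifies $\mathbf{V}_\Delta(\Pi^{\ord})$ with a three-dimensional representation whose socle filtration and extension pattern coincide with those of $\overline{L}^{\boxtimes}(\rho)^{\ord}$. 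The natural morphism $\mathbf{V}_\Delta(\Pi^{\ord}) \to \mathbf{V}_\Delta(\Pi)$ induced by $\Pi^{\ord} \hookrightarrow \Pi$ is then shown to be an embedding by exploiting the $\hrm{SP}^0$-nature of the source.

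For Part (1), the key is Corollary \ref{inj_breuil_zab} combined with weak compatibility, giving $\mathbf{V}_{\Delta,\xi}(\Pi) \hookrightarrow \mathbf{V}_\xi(\Pi) \simeq \overline{L}^{\otimes}(\rho)$; since $\dim \mathbf{V}_\Delta(\Pi) = \dim \mathbf{V}_{\Delta,\xi}(\Pi)$, it suffices to bound $\mathbf{V}_{\Delta,\xi}(\Pi)$ as a sub-representation of the $9$-dimensional $\overline{L}^{\otimes}(\rho)$. By Part (2) it already contains the $3$-dimensional diagonal restriction of $\overline{L}^{\boxtimes}(\rho)^{\ord}$, so it remains to bound the image in the $6$-dimensional quotient $\overline{L}^{\otimes}(\rho)/\overline{L}^{\otimes}(\rho)^{\ord}$, whose socle filtration for $p \neq 2$ is given by Corollary \ref{coro_subrep_restriction}: socle $(\det\rho)^{\oplus 3}$, intermediate layer $\chi_2^2\chi_3 \oplus \chi_1\chi_3^2$ extended non-trivially from two distinct $\det\rho$-summands (via $\chi_2\chi_3\delta_a$ and $\chi_1\chi_3\delta_b$), and cosocle $\chi_2\chi_3^2$ further extended from both intermediates. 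The decisive ingredient is Lemma \ref{mult_one}: the supersingular contribution $\mathbf{V}_\Delta(\hrm{SS})$ is either zero or an irreducible $V_1 \boxtimes V_2$ (Theorem \ref{irred_or_zero}) whose diagonal restriction $V_1 \otimes V_2$ carries each character, in particular $\det\rho$, with multiplicity at most one; meanwhile the six principal series contribute no $\det\rho$ at all. Including $\chi_2^2\chi_3$ forces one specific copy of $\det\rho$ below it, and including both $\chi_2^2\chi_3$ and $\chi_1\chi_3^2$ would force two distinct copies, contradicting the bound. Thus the sub of the quotient contains at most one intermediate character and excludes the cosocle, so it has dimension at most $2$ and $\dim \mathbf{V}_\Delta(\Pi) \leq 5$.

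For Part (3), one refines the argument above by distinguishing whether the sub of the $6$-dimensional quotient has dimension at most $1$ (just a single $\det\rho$ or zero, giving total dimension at most $4$) or exactly $2$ (a non-split extension $\det\rho \hookrightarrow \chi_2^2\chi_3$ or $\chi_1\chi_3^2$, giving total $5$), and identifies this dichotomy with the vanishing of the natural map $\mathbf{V}_\Delta(\Pi) \to \mathbf{V}_\Delta(\Pi^{\ord})$; the hypothesis $p \neq 2$ comes from the explicit basis change in Corollary \ref{coro_subrep_restriction}. The final consequence $\mathbf{V}_{\Delta,\xi}(\Pi) \not\simeq \overline{L}^{\otimes}(\rho)$ is then immediate from Part (1), since $5 < 9$. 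The hardest step will be the rigorous control of $\mathbf{V}_\Delta(\hrm{SS})$ under diagonal restriction: ensuring that no more than one copy of $\det\rho$ can appear in $\mathbf{V}_{\Delta,\xi}(\Pi)$ hinges on combining the irreducibility of $\mathbf{V}_\Delta(\hrm{SS})$ as a $\cg_{\qp}^2$-representation, its decomposition as an external tensor product $V_1 \boxtimes V_2$ with $V_i$ irreducible over $\cg_{\qp}$, and the sharp multiplicity bound from Lemma \ref{mult_one}.
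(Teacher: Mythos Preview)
Your overall strategy matches the paper's: identify $\mathbf{V}_\Delta(\Pi^{\ord})$ via Proposition~\ref{bij_PS}, embed $\mathbf{V}_{\Delta,\xi}(\Pi)$ into $\overline{L}^\otimes(\rho)$ via Corollary~\ref{inj_breuil_zab}, bound the multiplicity of $\det\rho$ in the supersingular contribution via Lemma~\ref{mult_one}, and read off the dimension bound from the explicit shape of the quotient in Corollary~\ref{coro_subrep_restriction}. Two points deserve comment.

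First, a small slip in Part~(2): the injection $\mathbf{V}_\Delta(\Pi^{\ord}) \hookrightarrow \mathbf{V}_\Delta(\Pi)$ follows directly from left exactness of $\mathbf{V}_\Delta$ (dual to the right exactness of $\mathbf{D}_\Delta^\vee$ in Theorem~\ref{exact_PS}), not from the $\hrm{SP}^0$-property of the source. The $\hrm{SP}^0$-property is what you need for the \emph{identification} of $\mathbf{V}_\Delta(\Pi^{\ord})$ with $\overline{L}^{\boxtimes}(\rho)^{\ord}$, via the transport of the non-split extension classes in Proposition~\ref{bij_PS}; that part of your argument is fine.

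Second, and more seriously, your treatment of Part~(3) has a genuine gap. You correctly observe that $\dim \mathbf{V}_\Delta(\Pi) = 5$ corresponds to the quotient piece $V = \mathbf{V}_{\Delta,\xi}(\Pi)/\mathbf{V}_{\Delta,\xi}(\Pi^{\ord})$ being a non-split extension of two distinct characters inside $\overline{L}^\otimes(\rho)/\overline{L}^\otimes(\rho)^{\ord}$. But you do not explain why such a non-split extension cannot sit entirely inside $\mathbf{V}_{\Delta,\xi}(\hrm{SS})$; you merely assert that the dichotomy ``identifies with'' the (non)vanishing of the map. The paper closes this with Corollary~\ref{no_nonsplit}: since $\mathbf{V}_\Delta(\hrm{SS})$ is either zero or irreducible as a $(\cg_{\qp}\times\cg_{\qp})$-representation (Theorem~\ref{irred_or_zero}), it factors through $\mathcal{I}_{\qp}^{\hrm{wild}}$ on each coordinate, and Lemma~\ref{triv_inertia_extension} then forbids any non-split extension of distinct characters inside its diagonal restriction. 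Without this ingredient the biconditional in Part~(3) is unsupported; Lemma~\ref{mult_one} alone only controls the socle multiplicity, not the possibility of a length-two non-semisimple subobject.
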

		\begin{proof}
			We have an exact sequence $$0 \rightarrow \mathbf{V}_{\Delta}(\Pi^{\ord})\rightarrow \mathbf{V}_{\Delta}(\Pi) \rightarrow \mathbf{V}_{\Delta}(\Pi/ \Pi^{\ord}).$$ In particular, thanks to weak compatibility and Corollary \ref{inj_breuil_zab}, we have $\mathbf{V}_{\Delta,\xi}(\Pi^{\ord})\hookrightarrow \overline{L}^{\otimes}(\rho)$. Using the second bijection of Proposition \ref{bij_PS}, $\mathbf{V}_{\Delta,\xi}(\Pi^{\ord})$ is an extension of two distinct characters by a third. Each character appearing only once (thanks to genericity) in $\hrm{soc}^2_{\cg_{\qp}}\overline{L}^{\otimes}(\rho)$, and more precisely they appear in its subrepresentation $\overline{L}^{\otimes}(\rho)^{\ord}$. It yields $\mathbf{V}_{\Delta,\xi}(\Pi^{\ord})\simeq \overline{L}^{\otimes}(\rho)^{\ord}$.
			
			Consider $V:= \mathbf{V}_{\Delta,\xi}(\Pi)/\mathbf{V}_{\Delta,\xi}(\Pi^{\ord})$. Weak compatility and the above isomorphism show that it is a subrepresentation of $\overline{L}^{\otimes}(\rho) / \overline{L}^{\otimes}(\rho)^{\ord}$. Thus, $\hrm{soc}_{\cg_{\qp}}(V)\hookrightarrow \hrm{soc}_{\cg_{\qp}}(\overline{L}^{\otimes}(\rho) / \overline{L}^{\otimes}(\rho)^{\ord})$ which is a direct sum of $(\det \rho)$. Left exactness of $\mathbf{V}_{\Delta,\xi}$ that it is a subrepresentation of $\mathbf{V}_{\Delta,\xi}(\Pi/\Pi^{\ord})$. This last one lives in an exact sequence $$0\rightarrow \mathbf{V}_{\Delta,\xi}(\Pi_{\hrm{SS}}) \rightarrow \mathbf{V}_{\Delta,\xi}(\Pi/\Pi^{\ord}) \rightarrow \mathbf{V}_{\Delta,\xi}(\Pi_{\ord}).$$ Using the last bit of Proposition \ref{bij_PS} to compute $\mathbf{V}_{\Delta,\xi}(\Pi_{\ord})$, $\det \rho$ is not one of its Jordan-Hölder factors, yielding that $\hrm{soc}_{\cg_{\qp}} V$ is a subrepresentation of $\mathbf{V}_{\Delta,\xi}(\Pi_{\hrm{SS}})$. Theorem \ref{irred_or_zero} says that $\mathbf{V}_{\Delta,\xi}(\Pi_{\hrm{SS}})$ is the restriction of an irreducible representation of $(\cg_{\qp}\times \cg_{\qp})$; Lemma \ref{mult_one} proves that $\hrm{soc}_{\cg_{\qp}} V$ is one dimensional. As $V$ is a subrepresentation of $\overline{L}^{\otimes}(\rho) / \overline{L}^{\otimes}(\rho)^{\ord}$, the description in Corollary \ref{coro_subrep_restriction} proves that it is at most $2$-dimensional which finishes to prove the first claim.
			
			It also proves that if $\dim V =2$, $V$ is an extension of two distincts (thanks to genericity) characters. Using Corollary \ref{no_nonsplit}, it can't be a subrepresentation of $\mathbf{V}_{\Delta,\xi}(\Pi_{\hrm{SS}})$, which proves the third claim.
			
			Finally, use simultaneously the two first isomorphisms of Proposition \ref{bij_PS} to show that \linebreak$\mathbf{V}_{\Delta,\xi}(\Pi^{\ord})=\overline{L}^{\otimes}(\rho)^{\ord}$ implies $\mathbf{V}_{\Delta}(\Pi^{\ord})=\overline{L}^{\boxtimes}(\rho)^{\ord}$.
		\end{proof}
	
	\begin{rem}
		We used nothing more that weak compatibility. With (stronger) compatibility in the sense of \cite{breuil2021conjectures}, we would obtain directly that $\mathbf{V}_{\Delta,\xi}(\Pi_{\hrm{SS}})$ is $1$-dimensional. I don't see how it may improves the results about dimensions.
	\end{rem}
	
	\begin{rem}
		Be careful that Lemma \ref{mult_one} breaks for $\# \Delta \geq 3$.
		
		We could improve the weak compatibility by replacing $\mathbf{V}_{\xi}(\Pi) \simeq \ov{L}^{\otimes}(\rho)$ by $\mathbf{V}_{\Delta}(\Pi) \subset \ov{L}^{\boxtimes}(\rho)$. This also give precise bounds of the dimension. For $\rho$ of dimension, length and Loewy length $4$, it gives $\dim \mathbf{V}_{\Delta}(\Pi)\leq 9$.
	\end{rem}

	
	
	
	
	\printbibliography

\end{document}